\documentclass[11pt]{article}

\usepackage[T1]{fontenc}
\usepackage[utf8]{inputenc}
\usepackage{latexsym,bm}
\usepackage{mathrsfs}
\usepackage{amsmath,amssymb,amsthm,mathtools}
\usepackage{graphicx}
\usepackage{color}
\usepackage{cite}
\usepackage{booktabs,array}
\usepackage{stfloats}
\usepackage{caption}
\usepackage{epstopdf}
\usepackage{upgreek}
\usepackage{microtype}
\usepackage{tikz}
\usetikzlibrary{calc}
\usepackage[colorlinks,
            linkcolor=blue,
            anchorcolor=blue,
            citecolor=blue]{hyperref}

\theoremstyle{plain}
\newtheorem{theorem}{Theorem}[section]

\newtheorem{lemma}[theorem]{Lemma}
\newtheorem{proposition}[theorem]{Proposition}
\newtheorem{corollary}[theorem]{Corollary}
\newtheorem{conjecture}[theorem]{Conjecture}
\theoremstyle{remark}
\newtheorem{remark}[theorem]{Remark}
\theoremstyle{definition}
\newtheorem{definition}[theorem]{Definition}

\definecolor{unitcolor}{RGB}{30,30,30}
\definecolor{longcolor}{RGB}{0,114,178}
\definecolor{matchcolor}{RGB}{213,94,0}

\DeclareGraphicsExtensions{.pdf,.png,.jpg,.jpeg,.eps,.eps.gz}
\usepackage[top=2.5cm,bottom=2.5cm,left=2.8cm,right=2.2cm]{geometry}
\allowdisplaybreaks[4]
\makeatletter
\@addtoreset{equation}{section}
\makeatother

\usepackage{indentfirst}
\newcommand{\LLY}{Lin--Lu--Yau}

\newcommand{\kzero}{\kappa_0}
\newcommand{\RF}{RF^5_{72}}

\begin{document}

\begin{center}
{{\huge On the Lei--Bai conjecture on $5$-regular Lin--Lu--Yau Ricci-flat graphs}}\\[18pt]
{\Large Guangfu Wang$^{1,2}$, Wensheng Sun$^{3}$, and Yujun Yang$^{4}$\footnotemark[1]}\\[6pt]
{\footnotesize
$^{1}$ School of Mathematics and Information Sciences, Yantai University, Yantai, Shandong, 264005, China\\
$^{2}$ ECOPRO, Institute for Basic Science,  55 Expo-ro, Yuseong-gu, Daejeon, 34126, Korea\\
$^{3}$ School of Mathematics and Statistics, Gansu Center for Applied Mathematics, Lanzhou University, Lanzhou, Gansu 730000, China\\
$^{4}$ Department of Mathematics and Artificial Intelligence, Qilu University of Technology (Shandong Academy of Sciences), Jinan, Shandong, 250100, China\\
E-mail addresses: \texttt{gfwang@ytu.edu.cn}, \texttt{wensheng07002@163.com}, and \texttt{yangyujun@qlu.edu.cn}}
\end{center}
\footnotetext[1]{Corresponding author. E-mail address: \texttt{yangyujun@qlu.edu.cn}.}

\vspace{1mm}
\begin{abstract}

We study the Ricci curvature introduced by Lin, Lu, and Yau. A graph is called Ricci-flat if every edge has curvature zero. Lei and Bai classified $5$-regular symmetric Ricci-flat graphs by proving that every such graph is isomorphic to a particular $72$-vertex graph $\RF$, and conjectured that every $5$-regular Ricci-flat graph is either isomorphic to $\RF$ or admits a nontrivial Cartesian product decomposition. In this paper, we disprove this conjecture by constructing an infinite family of connected $5$-regular Ricci-flat graphs, none of which is isomorphic to $\RF$ or admits a nontrivial Cartesian product decomposition. This shows that the conjectured extension of the classification from the symmetric setting to general $5$-regular Ricci-flat graphs fails and that the class of such graphs is substantially richer than previously conjectured. To establish these results, we use an optimal-assignment formulation of Lin--Lu--Yau curvature to verify the Ricci-flatness of the constructed graphs.

\noindent {\bf Keywords:} Lin--Lu--Yau curvature; Ollivier--Ricci curvature; Ricci-flat graph; graph bundle; Cartesian product\\
\vspace{1mm}
\noindent\textbf{2020 Mathematics Subject Classification:} 05C75, 05C12, 05C76, 53C21.
\end{abstract}

\section{Introduction}

Ricci curvature is a fundamental concept in differential geometry and geometric analysis, relating the local geometry of a space to many of its global geometric and analytic properties. Its importance in the smooth setting has motivated the development of various discrete notions of curvature, which have been defined and exploited to understand geometric, spectral, and diffusion properties of graphs. These include, among others, Bakry--\'{E}mery curvature \cite{dcu}, Forman curvature \cite{Forman}, Ollivier--Ricci curvature \cite{Ollivier}, and Lin--Lu--Yau curvature \cite{LLY}. More recent examples include Steinerberger curvature \cite{sst}, defined in terms of equilibrium measures and the graph distance matrix, and node and edge resistance curvatures \cite{kde1,kde}, defined in terms of effective resistance.

Among these discrete curvature notions, Ollivier--Ricci curvature has been extensively studied. In 2009, Ollivier \cite{Ollivier} introduced a coarse Ricci curvature for Markov chains on metric spaces using optimal transport. In the graph setting, Ollivier--Ricci curvature compares the distance between adjacent vertices with the optimal transportation distance between probability measures supported on their neighborhoods, and thus reflects the local geometric and connectivity structure around an edge. Subsequently, Lin, Lu, and Yau \cite{LLY} introduced an idleness-independent modification of Ollivier--Ricci curvature by considering its derivative as the idleness parameter tends to one. The resulting Lin--Lu--Yau curvature has since received considerable attention in graph theory. In this paper, we follow the Lin--Lu--Yau definition of Ricci curvature on graphs.

A particularly natural class arising from the Lin--Lu--Yau curvature is that of Ricci-flat graphs \cite{LinLuYauGirth}, namely, graphs whose curvature vanishes on every edge. This terminology is motivated by Ricci-flat manifolds in differential geometry, which play an important role in geometry and mathematical physics \cite{BesseEinstein} and include notable examples such as Calabi--Yau manifolds \cite{YauCalabi}. The analogy has led to considerable interest in understanding the structure and classification of Ricci-flat graphs. The classification of Ricci-flat graphs has been studied under various degree and girth restrictions. Lin, Lu, and Yau \cite{LinLuYauGirth} classified Ricci-flat graphs with girth at least five, while Cushing, Kangaslampi, Lin, Liu, Lu, and Yau \cite{CushingCubic} later corrected and refined the cubic girth-five case. He, Luo, Yang, Yuan, and Zhang \cite{HeEtAl} constructed an infinite family of Ricci-flat graphs of girth four with edge-disjoint $4$-cycles and completely characterized those with vertex-disjoint $4$-cycles. Bai, Lu, and Yau \cite{BaiLuYau} classified all Ricci-flat graphs with maximum degree at most four.

Recall that a graph is called symmetric if its automorphism group acts transitively on both its vertex set and its edge set. Along these lines, Lei and Bai studied $5$-regular symmetric Ricci-flat graphs. They constructed a $5$-regular symmetric Ricci-flat graph $\RF$ on $72$ vertices and proved that it is unique up to isomorphism \cite{LeiBai}. Motivated by this result, they proposed the following conjecture as a generalization from the symmetric case to all $5$-regular Ricci-flat graphs. In this paper, a graph is said to be of \emph{Cartesian product type} if it is isomorphic to a nontrivial Cartesian product $X\square Y$ of finite simple graphs, where both $X$ and $Y$ have at least two vertices.

\begin{conjecture}[Lei and Bai \cite{LeiBai}]\label{conj:Lei-Bai}
Every $5$-regular Ricci-flat graph is either isomorphic to $\RF$ or is of Cartesian product type.
\end{conjecture}

In this paper, we focus on this conjecture and disprove it by constructing an infinite family of connected $5$-regular Ricci-flat graphs, none of which is isomorphic to $\RF$ or admits a nontrivial Cartesian product decomposition. Our main result is the following.

\begin{theorem}\label{thm:main}
There exists an infinite family of connected $5$-regular Ricci-flat graphs, each of which is neither isomorphic to $\RF$ nor of Cartesian product type.
\end{theorem}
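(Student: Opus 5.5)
The plan is to build the family as graph bundles, or twisted products, that are locally indistinguishable from a Cartesian product known to be Ricci-flat but globally not a product. Lin--Lu--Yau curvature of an edge $xy$ depends only on the subgraph induced by vertices within distance two of $\{x,y\}$, i.e.\ on balls of radius at most $3$ around the endpoints. So if every such local configuration is isomorphic to one in a Ricci-flat model graph, Ricci-flatness carries over. A natural model is $C_n\square H$, where $H$ is a $3$-regular Ricci-flat graph, for instance from the Bai--Lu--Yau classification. Another option is $C_m\square C_n\square C_k$ with all cycles long, which is $6$-regular and therefore not the right degree. The cleanest $5$-regular model is $C_n \square H$ with $H$ cubic Ricci-flat, for example a suitable cubic girth-$\ge 6$ Ricci-flat graph, or $C_m\square K_2\square C_k$ arranged appropriately.

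\textbf{Construction.} Take the base cycle $C_n$ and fibre $H$, and glue consecutive fibres by the identity except across one edge of the cycle, where we glue by a nontrivial automorphism $\sigma$ of $H$. This is a Cartesian graph bundle $C_n \times^{\sigma} H$. Choosing $\sigma$ so that it moves every vertex far, or at least so that no vertex $v$ has $\sigma(v)$ adjacent to $v$, and taking $n$ large, ensures that every radius-$3$ neighbourhood of an edge is isomorphic to the corresponding neighbourhood in $C_n\square H$. Letting $n\to\infty$ gives an infinite family of connected $5$-regular graphs.

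\textbf{Ricci-flatness.} I will verify this via the optimal-assignment formulation mentioned in the abstract. For a $d$-regular graph, $\kappa(x,y)$ equals $\frac{1}{d}$ times the quantity $d+1-\min$ over bijections $\phi\colon N(x)\setminus\{y\}\to N(y)\setminus\{x\}$ of $\sum_{z} d(z,\phi(z))$, adjusted appropriately. I will split the edges into \emph{horizontal} edges (along the cycle direction) and \emph{vertical} edges (inside a fibre). For each type, I exhibit an explicit assignment achieving cost giving $\kappa=0$, e.g.\ matching $(i,v')\mapsto(i+1,v')$ for fibre neighbours. For the lower bound I exhibit a $1$-Lipschitz function, either transported from the product case via the local isomorphism or written down directly. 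Since both certificates live in the local neighbourhood, they transfer verbatim.

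\textbf{Non-isomorphism and non-product.} $\RF$ has $72$ vertices, so it suffices to take $n$ large enough that the vertex count differs. The main obstacle is showing the bundle is \emph{prime} with respect to the Cartesian product. I would use the Sabidussi--Imrich theory: the Cartesian factorization is determined by the product relation (the transitive closure of the Djoković--Winkler relation, where edges opposite in a $4$-cycle, or in a chordless square, are related). In the bundle, squares containing a vertical and a horizontal edge force horizontal edges across successive fibres to be related to one another. Going once around the cycle, the twist $\sigma$ identifies the classes of vertical edges $e$ and $\sigma(e)$. Choosing $\sigma$ so that the vertical edges together with the square relation generate a single class containing the horizontal edges as well, for example because $H$ itself has $4$-cycles mixing or because $\sigma$ does not preserve any factorization of $H$, shows the relation has one class, hence the graph is prime. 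The key technical step, which I expect to be hardest, is picking $H$ and $\sigma$ so that this collapse of classes genuinely happens. I would first try $H$ prime and $\sigma$ with a single orbit on $E(H)$, so that twisting merges all fibre-edge classes, and then check that the horizontal class merges too via squares created near the twist.
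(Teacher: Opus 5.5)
\textbf{Gap in the primality step.} The mechanism you propose for showing the bundle is prime cannot work. The relation you use is purely local: opposite edges of (chordless) squares, even together with $\tau$ (adjacent edges lying in no common chordless square). Your bundle is locally a product, which is exactly what you exploited for Ricci-flatness. For $n\ge5$, every square of $C_n\times^{\sigma}H$ either lies in a fibre or has the form $(i,v)(i+1,v)(i+1,w)(i,w)$; across the twisted edge it still closes, because $\sigma$ is an automorphism. So opposite edges of a square always have the same type. Moreover, every adjacent horizontal/vertical pair spans such a chordless square, so $\tau$ never relates them either. Hence horizontal and vertical edges always stay in different classes, for every choice of $H$ and $\sigma$, and no squares ``near the twist'' can merge them. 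You have also misidentified the Djokovi\'c--Winkler relation: $\Theta$ is distance-based, $d(x,u)+d(y,v)\neq d(x,v)+d(y,u)$, and the product relation is $(\Theta\cup\tau)^*$, not the square relation. Only this global $\Theta$ can see the twist. For $H$ the Petersen graph, $\sigma$ the rotation and $n$ odd, one can check that it relates a horizontal edge to suitable vertical edges in the roughly antipodal fibre. But that global computation is exactly what is missing from your argument.

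\textbf{How the paper closes it.} The paper obtains a global invariant from Ricci-flatness itself. Every factor is regular and, by the product formula, Ricci-flat; $K_2$ is excluded. So any factorization is $C_m\square Y$ with $m\ge6$ and $Y$ cubic, Ricci-flat and triangle-free, and therefore no $C_m$-direction edge lies in a $5$-cycle. In $H_q$ the edges in $5$-cycles are exactly the non-unit edges. An edge count ($15q$ each) makes them exactly the $Y$-direction edges, and then the unit edges would have to form $|V(Y)|$ cycles rather than two.

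\textbf{Transfer to your setting.} This argument applies to your construction. With fibre the Petersen graph $P$ and $\sigma$ its order-$5$ rotation, your bundle over $C_q$ \emph{is} $H_q$ (Lemma~\ref{lem:HqPetersen}). More generally, take fibre $P$ or the dodecahedron and any $\sigma\neq\mathrm{id}$. The $Y$-fibres must be the $n$ copies of $H$, which forces $m=n$. But the horizontal edges form one cycle of length $n|O|$ for each $\sigma$-orbit $O$, and these are not all of length $n$ unless $\sigma=\mathrm{id}$.

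\textbf{Ricci-flatness and the choice of fibre.} Your local-product proof of Ricci-flatness is a valid and more conceptual alternative to the paper's distance tables; it works for any $\sigma$ once $n\ge6$. However, two of your candidate fibres are not Ricci-flat. A cubic graph of girth at least $6$ has $m(x,y)=6$, hence $\kappa=-2/3$, on every edge. In $C_m\square K_2\square C_k$ the $K_2$-direction edges have curvature $\frac25$. You need a fibre such as the Petersen or dodecahedral graph.
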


The remainder of this paper is organized as follows. In Section~2, we review the necessary definitions and preliminary results concerning Ollivier--Ricci curvature, Lin--Lu--Yau curvature, optimal assignments, and Cartesian products of graphs. In Section~3, we construct an infinite family of connected $5$-regular Ricci-flat graphs that are neither isomorphic to $\RF$ nor of Cartesian product type, thereby proving the main result. In Section~4, we further study the zero-idleness Ollivier--Ricci curvature of the constructed graphs and show that they are not bone-idle.
\section{Preliminaries}

All graphs considered in this paper are finite, simple, connected, and unweighted. Let $G=(V,E)$ be a graph. For each vertex $x\in V$, let $N(x)$ denote its neighborhood, let $N[x]=N(x)\cup{x}$ denote its closed neighborhood, and let $d_x=|N(x)|$ denote its degree. Two vertices $x,y\in V$ are said to be adjacent if $xy\in E$, in which case we write $x\sim y$. The distance between two vertices $u,v\in V$, denoted by $d(u,v)$, is the length of a shortest $u$--$v$ path in $G$. For two graphs $G$ and $H$, we write $G\cong H$ to mean that $G$ and $H$ are isomorphic.

A probability measure on $V$ is a function $\mu:V\to[0,1]$ satisfying $\sum_{z\in V}\mu(z)=1$. For a vertex $x\in V$ and a parameter $\alpha\in[0,1]$, define the probability measure $\mu_x^\alpha$ on $V$ by
\[
\mu_x^\alpha(z)=
\begin{cases}
\alpha, & z=x,\\[1mm]
\dfrac{1-\alpha}{d_x}, & z\in N(x),\\[2mm]
0, & \text{otherwise}.
\end{cases}
\]
Indeed, we have
\[
\sum_{z\in V}\mu_x^\alpha(z)
=\alpha+\sum_{z\in N(x)}\frac{1-\alpha}{d_x}
=\alpha+d_x\frac{1-\alpha}{d_x}
=1.
\]
Thus, a mass $\alpha$ is placed at $x$, while the remaining mass $1-\alpha$ is distributed uniformly among the neighbors of $x$. The parameter $\alpha$ is called the \emph{idleness}.

\begin{definition}[1-Wasserstein distance]\label{def:Wasserstein}
Let $\mu$ and $\nu$ be two probability measures on $V$. A transport plan from $\mu$ to $\nu$ is a function $\pi:V\times V\to[0,1]$ satisfying
\[
\sum_{v\in V}\pi(u,v)=\mu(u)\quad\text{for every }u\in V,
\qquad
\sum_{u\in V}\pi(u,v)=\nu(v)\quad\text{for every }v\in V.
\]
We denote the set of all transport plans from $\mu$ to $\nu$ by $\Pi(\mu,\nu)$. The $1$-Wasserstein distance between $\mu$ and $\nu$, with respect to the graph metric, is defined by
\[
W_1(\mu,\nu)=\min_{\pi\in\Pi(\mu,\nu)}\sum_{u,v\in V}d(u,v)\pi(u,v).
\]
\end{definition}

\begin{definition}[Ollivier--Ricci curvature \cite{Ollivier}]\label{def:OR-curvature}
Let $G=(V,E)$ be a connected graph. For two distinct vertices $x,y\in V$ and $\alpha\in[0,1]$, the Ollivier--Ricci curvature with idleness $\alpha$ is defined by
\[
\kappa_\alpha(x,y)=1-\frac{W_1(\mu_x^\alpha,\mu_y^\alpha)}{d(x,y)}.
\]
In particular, if $x\sim y$, then $d(x,y)=1$, and hence $\kappa_\alpha(x,y)=1-W_1(\mu_x^\alpha,\mu_y^\alpha)$.
\end{definition}

\begin{definition}[Lin--Lu--Yau curvature \cite{LLY}]\label{def:LLY-curvature}
Let $G=(V,E)$ be a connected graph. The Lin--Lu--Yau curvature of an edge $xy$ is defined by
\begin{equation}\label{eq:LLY-definition}
\kappa(x,y)=\lim_{\alpha\to1^-}\frac{\kappa_\alpha(x,y)}{1-\alpha}
=\lim_{\alpha\to1^-}\frac{1-W_1(\mu_x^\alpha,\mu_y^\alpha)}{1-\alpha}.
\end{equation}
\end{definition}

\begin{definition}[Ricci-flat graph \cite{HehlRegular}]\label{def:Ricci-flat}
A graph $G$ is called \emph{Ricci-flat} if $\kappa(x,y)=0$ for every edge $xy\in E(G)$.
\end{definition}

For regular graphs, the Lin--Lu--Yau curvature admits a convenient optimal-assignment formulation.

\begin{definition} \label{def:assignment}
Let $G=(V,E)$ be a connected graph. Let $x\sim y$ with $d_x=d_y=d$. Define $L_{xy}=N(x)\setminus N[y]$ and $R_{xy}=N(y)\setminus N[x]$. Let $c_{xy}=|N(x)\cap N(y)|$. Then $|L_{xy}|=|R_{xy}|=d-1-c_{xy}$. An \emph{assignment} from $L_{xy}$ to $R_{xy}$ is a bijection $\varphi:L_{xy}\to R_{xy}$. Let $\mathcal A_{xy}$ denote the set of all such assignments, and define
\[
m(x,y)=\min_{\varphi\in\mathcal A_{xy}}\sum_{u\in L_{xy}}d\bigl(u,\varphi(u)\bigr).
\]
When $L_{xy}=R_{xy}=\varnothing$, we use the convention $m(x,y)=0$.
\end{definition}

\begin{proposition}[{\cite{HehlRegular}}]\label{prop2.6}
Let $G=(V,E)$ be a connected graph, and let $xy\in E$ satisfy $d_x=d_y=d$. Then
\begin{equation}\label{eq:assignment-formula}
\kappa(x,y)=\frac{1}{d}\bigl(d+1-m(x,y)\bigr).
\end{equation}
\end{proposition}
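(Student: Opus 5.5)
The plan is to prove Proposition~\ref{prop2.6} in two stages. First I will reduce the $W_1$ computation to a combinatorial assignment problem for $\alpha$ close to $1$. Then I will read off the linear coefficient in $1-\alpha$.

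Fix the edge $xy$ with $d_x=d_y=d$ and write $C=N(x)\cap N(y)$, $c=c_{xy}=|C|$, $L=L_{xy}$, $R=R_{xy}$. For $\alpha\ge 1/(d+1)$ we have $\alpha\ge (1-\alpha)/d$, and the two measures agree on $C$. Compare them vertex by vertex:
\begin{itemize}
\item $\mu_x^\alpha$ exceeds $\mu_y^\alpha$ by $\alpha-\frac{1-\alpha}{d}$ at $x$ and by $\frac{1-\alpha}{d}$ at each $u\in L$;
\item $\mu_y^\alpha$ exceeds $\mu_x^\alpha$ by $\alpha-\frac{1-\alpha}{d}$ at $y$ and by $\frac{1-\alpha}{d}$ at each $v\in R$.
\end{itemize}
Since $W_1$ depends only on $\mu-\nu$ (a standard fact via Kantorovich duality with $1$-Lipschitz potentials), $W_1(\mu_x^\alpha,\mu_y^\alpha)$ equals the optimal cost of moving the excess on $\{x\}\cup L$ to the deficit on $\{y\}\cup R$.

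\textbf{Upper bound.} Take any bijection $\varphi\in\mathcal A_{xy}$. Send the mass $\alpha-\frac{1-\alpha}{d}$ from $x$ to $y$ at cost $1$ per unit, and send the mass $\frac{1-\alpha}{d}$ from each $u$ to $\varphi(u)$ at cost $d(u,\varphi(u))\le 3$ per unit. This gives
\[
W_1\le \alpha-\frac{1-\alpha}{d}+\frac{1-\alpha}{d}\,m(x,y).
\]

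\textbf{Lower bound.} Here I will use duality and exhibit a $1$-Lipschitz $f$ achieving this value. By LP duality for the assignment problem (the Birkhoff polytope is integral), there are potentials $a_u,b_v$ with
\[
b_v-a_u\le d(u,v)\ \text{for all } u\in L,\,v\in R,\qquad \sum_{v\in R} b_v-\sum_{u\in L} a_u=m(x,y).
\]
From these I need to build a $1$-Lipschitz $f$ on $V$ with $f(y)-f(x)=1$, $f=a_u$ on $L$, and $f=b_v$ on $R$. Then
\[
\sum f\,(\mu_y^\alpha-\mu_x^\alpha)=\Bigl(\alpha-\frac{1-\alpha}{d}\Bigr)+\frac{1-\alpha}{d}\,m(x,y).
\]

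\textbf{Main obstacle.} The key step is showing such an $f$ exists. Points of $L$ are at distance $1$ from $x$ and at most $2$ from $y$, and symmetrically for $R$. So I will normalize the potentials so that $a_u\in\{0,1,2\}$ with $f(x)=1$, and $b_v\in\{1,2,3\}$ with $f(y)=2$, say. To do this I will restrict to integer optimal duals and shift and clip them into the allowed windows; clipping preserves feasibility because every relevant distance lies in $\{1,2,3\}$. Then extend $f$ to all of $V$ by the McShane extension, which preserves the $1$-Lipschitz property provided $f$ is $1$-Lipschitz on $\{x,y\}\cup L\cup R$. That reduces to checking the constraints $|a_u-a_{u'}|\le d(u,u')$ and the analogue on $R$; I expect these need care, and I may choose the clipped values as distance-based functions to guarantee them. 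An alternative I would try first is the standard approach of Bai--Lu--Yau: prove that some optimal transport plan moves the excess from $x$ directly to $y$ and moves each remaining $\frac{1-\alpha}{d}$ chunk as a whole, by an uncrossing/integrality argument on the transportation polytope, with all distances bounded by $3$.

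\textbf{Conclusion.} The two bounds give, for $\alpha$ near $1$,
\[
W_1=\alpha+\frac{1-\alpha}{d}\bigl(m(x,y)-1\bigr),
\]
so
\[
1-W_1=(1-\alpha)\Bigl(1-\frac{m(x,y)-1}{d}\Bigr)=\frac{1-\alpha}{d}\bigl(d+1-m(x,y)\bigr).
\]
Dividing by $1-\alpha$ and letting $\alpha\to1^-$ yields \eqref{eq:assignment-formula}.
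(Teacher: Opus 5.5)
Your setup is correct: for $\alpha\ge 1/(d+1)$ you have $\mu_x^\alpha-\mu_y^\alpha=\beta(\delta_x-\delta_y)+\epsilon(\mathbf 1_L-\mathbf 1_R)$ with $\beta=\alpha-\frac{1-\alpha}{d}\ge 0$ and $\epsilon=\frac{1-\alpha}{d}$. The upper bound $W_1\le\beta+\epsilon\,m(x,y)$ is fine, weak duality is the right tool for the lower bound, and the final algebra is right.

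\textbf{The gap.} The lower bound, which is the only nontrivial part, is not proved. You flag the existence of the $1$-Lipschitz $f$ as the main obstacle and leave it open, and the mechanism you propose does not work as stated. Clipping preserves $b_v-a_u\le d(u,v)$, but it does not preserve the value $\sum_R b_v-\sum_L a_u=m(x,y)$: raising an $a_u$ or lowering a $b_v$ lowers it. More seriously, the assignment LP has no constraint between two vertices of $L$, or between two vertices of $R$. Nothing in your argument stops adjacent $u,u'\in L$ from receiving values $0$ and $2$, and then the McShane extension is unavailable.

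\textbf{The missing idea: complementary slackness.} Fix an optimal bijection $\varphi$ (Birkhoff) and any optimal dual $(a,b)$. Since $\sum_u(b_{\varphi(u)}-a_u)=m(x,y)=\sum_u d(u,\varphi(u))$ with termwise $\le$, every matched pair is tight: $b_{\varphi(u)}=a_u+d(u,\varphi(u))$. Combining tightness with feasibility and the triangle inequality gives
\begin{gather*}
a_{u'}-a_u\le d\bigl(u,\varphi(u')\bigr)-d\bigl(u',\varphi(u')\bigr)\le d(u,u'),\\
b_{v'}-b_v\le d\bigl(\varphi^{-1}(v),v'\bigr)-d\bigl(\varphi^{-1}(v),v\bigr)\le d(v,v').
\end{gather*}
So any optimal dual is automatically $1$-Lipschitz inside $L$ and inside $R$. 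The first inequality together with $1\le d(u,v)\le 3$ also gives $\max a-\min a\le 2$. Shift so that $\min a=0$; then $a_u\in[0,2]$ and $b_v\in[1,3]$ with no clipping at all. Every remaining constraint on $\{x,y\}\cup L\cup R$ with $f(x)=1$, $f(y)=2$ is then immediate, including $a_u-b_v\le 1\le d(u,v)$.

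\textbf{Alternative fix.} The primal uncrossing you mention also works in a few lines. In any plan, the mass sent from $x$ into $R$ equals the mass sent from $L$ into $y$ (both are $\beta$ minus the mass sent from $x$ to $y$). Replacing a pair $x\to v$, $u\to y$ (cost $2+2$) by $x\to y$, $u\to v$ (cost $1+d(u,v)\le 4$) never increases cost. So some optimal plan sends all of $\beta$ from $x$ to $y$, and the remainder is a plan from $\epsilon\mathbf 1_L$ to $\epsilon\mathbf 1_R$, costing at least $\epsilon\,m(x,y)$ by Birkhoff--von Neumann.

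\textbf{Comparison with the paper.} Once completed, your route differs from the paper's sketch. The paper evaluates $W_1$ only at $\alpha=1/(d+1)$, where $\beta=0$. There the $x$--$y$ coupling that causes your obstacle disappears and the assignment reduction is immediate. The limit is then outsourced to the identity $\kappa(x,y)=\frac{d+1}{d}\kappa_{1/(d+1)}(x,y)$ of Bourne et al. Your argument computes $W_1(\mu_x^\alpha,\mu_y^\alpha)=\beta+\epsilon\,m(x,y)$ on all of $[1/(d+1),1)$. It therefore proves that linearity directly in this setting and is self-contained, at the price of handling the $x$--$y$ interaction explicitly.
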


\begin{remark}
At $\alpha=1/(d+1)$, the measures $\mu_x^\alpha$ and $\mu_y^\alpha$ are the uniform probability measures on $N[x]$ and $N[y]$, respectively. Their common support can be matched to itself at zero cost, and the remaining transportation problem reduces to a minimum-cost assignment from $L_{xy}$ to $R_{xy}$. Together with $\kappa(x,y)=\frac{d+1}{d}\kappa_{1/(d+1)}(x,y)$ \cite{bou}, this yields \eqref{eq:assignment-formula}.
\end{remark}

In particular, if $G$ is $5$-regular, then $\kappa(x,y)=0$ if and only if $m(x,y)=6$.

\begin{lemma}[\cite{LLY,HehlRegular}]\label{lem2.1}
For every $n\geq2$, the complete graph $K_n$ has constant Lin--Lu--Yau curvature $\kappa(K_n)=n/(n-1)$. Moreover, if a graph $G$ satisfies $\kappa(x,y)>1$ for every edge $xy\in E(G)$, then $G$ is a complete graph.
\end{lemma}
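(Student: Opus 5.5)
The statement just given is Lemma~\ref{lem2.1}. Its two parts are handled separately, and both follow from Proposition~\ref{prop2.6}. For the first part, take $G=K_n$ with $n\ge2$, which is $(n-1)$-regular. Fix an edge $xy$ and set $d=n-1$. Every other vertex is adjacent to both $x$ and $y$, so $N(x)\setminus N[y]=\varnothing$ and $N(y)\setminus N[x]=\varnothing$. Hence $L_{xy}=R_{xy}=\varnothing$, and by convention $m(x,y)=0$. Formula \eqref{eq:assignment-formula} then gives $\kappa(x,y)=(d+1)/d=n/(n-1)$ for every edge. In particular, $K_2$ has curvature $2$ on its single edge.

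For the second part, the regular case suggests the idea. By \eqref{eq:assignment-formula}, $\kappa(x,y)>1$ forces $m(x,y)<1$. Every pair in $L_{xy}\times R_{xy}$ consists of distinct vertices, so $m(x,y)$ is either $0$ or at least $|L_{xy}|\ge1$. Therefore $m(x,y)=0$, which means $N[x]=N[y]$ for every edge $xy$.

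Irregular graphs need a bound that does not assume $d_x=d_y$. I would use the transport bound of Lin--Lu--Yau \cite{LLY} in the standard form
\[
\kappa(x,y)\le \frac{|N(x)\cap N(y)|}{\max(d_x,d_y)}+\frac{2}{\max(d_x,d_y)}\cdot\text{(correction)}.
\]
A cleaner route is to argue directly. Take a $1$-Lipschitz test function $f$ with $f=1$ on $N(x)\setminus N[y]$ and on $y$, and $f=0$ elsewhere in $N[x]\cup N[y]$. Then compute $\kappa(x,y)\le \lim_{\alpha\to1}\frac{1}{1-\alpha}\bigl(1-(\mu_x^\alpha f-\mu_y^\alpha f)\bigr)$ via Kantorovich duality. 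The cleanest choice should give $\kappa(x,y)\le 1$ whenever $N(x)\setminus N[y]\ne\varnothing$ or $N(y)\setminus N[x]\ne\varnothing$. Checking that this choice works for all degree patterns is the step I expect to be the main obstacle. The fallback is to cite the known estimate $\kappa(x,y)\le \frac{2+|N(x)\cap N(y)|}{\max(d_x,d_y)}$ from \cite{LLY}.

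Assume that bound. If $\kappa(x,y)>1$, then $|N(x)\cap N(y)|>\max(d_x,d_y)-2$. Since $|N(x)\cap N(y)|\le \min(d_x,d_y)-1$, this yields $d_x=d_y$ and $|N(x)\cap N(y)|=d_x-1$. That is again $N[x]=N[y]$.

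Finally, I would conclude by connectivity. Having $N[x]=N[y]$ for every edge $xy$ means adjacent vertices have identical closed neighborhoods. Along any path, closed neighborhoods therefore propagate unchanged. In a connected graph this gives $N[x]=V$ for all $x$, so $G$ is complete.
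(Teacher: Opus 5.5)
Your overall strategy is sound, and since the paper gives no proof of this lemma (it is quoted from \cite{LLY,HehlRegular}), it would be a legitimate self-contained route. The individual steps that are complete are all correct:
\begin{itemize}
\item the $K_n$ computation via Proposition~\ref{prop2.6}, including $\kappa(K_2)=2$;
\item the deduction from $\kappa(x,y)\le\frac{2+c}{\max(d_x,d_y)}$ and $c=|N(x)\cap N(y)|\le\min(d_x,d_y)-1$ to $d_x=d_y$ and $N[x]=N[y]$;
\item the propagation of closed neighbourhoods along paths, which uses connectivity.
\end{itemize}

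The gap is that the irregular case rests entirely on that upper bound, and you do not establish it. The first displayed ``standard form'' with a ``(correction)'' factor is not a statement. The test function you propose also does not work. With $f=1$ on $L_{xy}\cup\{y\}$ and $f=0$ on $x$, $N(x)\cap N(y)$ and $R_{xy}$, you have $f(x)-f(y)=-1$. The best that weak duality gives with this $f$ is $\kappa(x,y)\le 1+\frac{1+|L_{xy}|}{d_x}$, which exceeds $1$ and is useless.

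The fix is short. For any $1$-Lipschitz $f$ with $f(x)-f(y)=1$, weak duality $W_1(\mu,\nu)\ge\sum_z f(z)\bigl(\mu(z)-\nu(z)\bigr)$ gives
\[
\kappa_\alpha(x,y)\le(1-\alpha)\bigl(1-A_xf+A_yf\bigr),\qquad A_vf=\frac{1}{d_v}\sum_{z\in N(v)}f(z),
\]
and hence $\kappa(x,y)\le 1-A_xf+A_yf$. Take $f(z)=\min\{d(z,y),2\}$. Then $A_yf=1$ and $A_xf=\frac{c+2(d_x-1-c)}{d_x}$, so $\kappa(x,y)\le\frac{2+c}{d_x}$. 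By symmetry, $\kappa(x,y)\le\frac{2+c}{\max(d_x,d_y)}$. This is exactly your fallback estimate, and it is sharp for $K_n$.

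With this computation your proof is complete and no longer depends on locating the estimate in the literature. Compared with the paper's bare citation, your argument gives a self-contained proof of the general non-regular statement from Proposition~\ref{prop2.6} and weak duality. Note also that the paper only ever uses $\kappa(K_2)=2$, in Theorem~\ref{thm:Hqnotproduct}. For regular graphs, your observation that $m(x,y)\in\{0\}\cup[1,\infty)$ already yields the second part directly from Proposition~\ref{prop2.6}.
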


\begin{lemma}[\cite{LLY}]\label{lem2.2}
Let $C_n$ be the cycle of length $n\geq3$. Then every edge of $C_n$ has the same Lin--Lu--Yau curvature, with $\kappa(C_3)=3/2$, $\kappa(C_4)=1$, and $\kappa(C_5)=1/2$, whereas $\kappa(C_n)=0$ for every $n\geq6$. In particular, $C_n$ is Ricci-flat for every $n\geq6$.
\end{lemma}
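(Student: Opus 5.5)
The statement is Lemma~\ref{lem2.2}, on the curvature of cycles. The plan is to apply Proposition~\ref{prop2.6} with $d=2$, which gives
\[
\kappa(x,y)=\tfrac12\bigl(3-m(x,y)\bigr).
\]
The dihedral group acts transitively on the edges of $C_n$, so every edge has the same curvature. It therefore suffices to fix one edge $xy$ and compute $m(x,y)$.

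Label the cycle $v_0v_1\cdots v_{n-1}$ with $x=v_0$ and $y=v_1$. Then $N(x)=\{v_{n-1},v_1\}$ and $N(y)=\{v_0,v_2\}$. We split into cases according to $n$.

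\emph{Case $n=3$.} Here $v_2=v_{n-1}$ is a common neighbour, so $c_{xy}=1$ and $L_{xy}=R_{xy}=\varnothing$. By the convention $m=0$, we get $\kappa=3/2$. This agrees with $\kappa(K_3)=3/2$ from Lemma~\ref{lem2.1}.

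\emph{Case $n\ge4$.} There is no common neighbour, and $L_{xy}=\{v_{n-1}\}$, $R_{xy}=\{v_2\}$. The only assignment is $v_{n-1}\mapsto v_2$, so
\[
m=d(v_{n-1},v_2)=\min(3,n-3).
\]
This distance equals $1$ for $n=4$ (the two vertices are adjacent), $2$ for $n=5$, and $3$ for every $n\ge6$, since the path through $v_0,v_1$ has length $3$ and the other arc has length $n-3\ge3$. Substituting gives $\kappa=1$, $\kappa=1/2$ and $\kappa=0$, respectively.

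Ricci-flatness for $n\ge6$ then follows at once from Definition~\ref{def:Ricci-flat}. There is no real obstacle in this argument. The only care needed is the distance computation on the cycle and handling the degenerate case $n=3$ through the stated convention $m=0$.
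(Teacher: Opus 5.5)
Your argument is correct. The paper does not prove Lemma~\ref{lem2.2} itself; it cites the values from the original work of Lin, Lu, and Yau, and uses them in Theorem~\ref{thm:Hqnotproduct} to force $m\ge6$ for a Ricci-flat cycle factor $C_m$. You instead derive the lemma from Proposition~\ref{prop2.6} with $d=2$, so that $\kappa=\tfrac12(3-m)$. Your case analysis is right: for $n=3$ the sets $L_{xy},R_{xy}$ are empty, and for $n\ge4$ the unique assignment $v_{n-1}\mapsto v_2$ has cost $\min(3,n-3)\in\{1,2,3\}$.

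This is the same kind of computation the paper carries out for $H_q$ and for the triangle-freeness of the cubic factor $Y$. So your version makes the preliminaries self-contained relative to Proposition~\ref{prop2.6}, at the cost of a few lines. The price is that it depends on what lies behind Proposition~\ref{prop2.6}. That means the identity $\kappa(x,y)=\frac{d+1}{d}\kappa_{1/(d+1)}(x,y)$, and the reduction of transport between the uniform measures on $N[x]$ and $N[y]$ to an assignment from $L_{xy}$ to $R_{xy}$. A first-principles proof would instead bound $W_1(\mu_x^\alpha,\mu_y^\alpha)$ for $\alpha$ near $1$ from above by an explicit transport plan and from below by a $1$-Lipschitz test function. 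Those general ingredients do not depend on cycles, so your route is not circular.
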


The \emph{Cartesian product} of two graphs $G$ and $H$, denoted by $G\square H$, is the graph with vertex set $V(G)\times V(H)$ in which two vertices $(u,x)$ and $(v,y)$ are adjacent if and only if either $u=v$ and $xy\in E(H)$, or $x=y$ and $uv\in E(G)$. Thus, every edge of $G\square H$ changes exactly one coordinate. An edge joining $(u,x)$ and $(v,x)$, where $uv\in E(G)$, is called a $G$-direction edge, while an edge joining $(u,x)$ and $(u,y)$, where $xy\in E(H)$, is called an $H$-direction edge.

We shall also use the Cartesian product formula for Lin--Lu--Yau curvature. Let $X$ and $Y$ be regular graphs of degrees $d_X$ and $d_Y$, respectively. Then $X\square Y$ is $(d_X+d_Y)$-regular.

\begin{theorem}[\cite{CushingProducts,LLY}]\label{thm:product-formula}
Let $X$ and $Y$ be $d_X$-regular and $d_Y$-regular graphs, respectively. For an $X$-direction edge $(x,y)(x_1,y)$ of $X\square Y$, where $xx_1\in E(X)$, we have
\begin{equation}\label{eq:product-formula}
\kappa^{X\square Y}\bigl((x,y),(x_1,y)\bigr)=\frac{d_X}{d_X+d_Y}\kappa^X(x,x_1).
\end{equation}
Similarly, for a $Y$-direction edge $(x,y)(x,y_1)$ of $X\square Y$, where $yy_1\in E(Y)$, we have
\begin{equation}\label{eq:product-formula-Y}
\kappa^{X\square Y}\bigl((x,y),(x,y_1)\bigr)=\frac{d_Y}{d_X+d_Y}\kappa^Y(y,y_1).
\end{equation}
\end{theorem}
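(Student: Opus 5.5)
We prove the $X$-direction formula \eqref{eq:product-formula}. The $Y$-direction formula \eqref{eq:product-formula-Y} then follows by swapping the roles of $X$ and $Y$. Put $d=d_X+d_Y$, $p=(x,y)$ and $q=(x_1,y)$.

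\textbf{Route and the one outside result.} Rather than computing the limit in \eqref{eq:LLY-definition} directly, I would use the limit-free characterization of Lin--Lu--Yau curvature due to M\"unch and Wojciechowski. This characterization is \emph{not} among the results stated earlier in the paper, so it has to be either cited or justified separately; its justification is the main obstacle, discussed at the end. Write
\[
\Delta f(v)=\tfrac1{d_v}\sum_{w\sim v}\bigl(f(w)-f(v)\bigr).
\]
The characterization says that for an edge $uv$,
\[
\kappa(u,v)=\inf\Bigl\{\Delta f(u)-\Delta f(v)\ :\ f \text{ is } 1\text{-Lipschitz},\ f(v)-f(u)=1\Bigr\}.
\]
The other fact I use is that the product metric is additive:
\[
d\bigl((a,b),(a',b')\bigr)=d_X(a,a')+d_Y(b,b').
\]
In particular, a lift $F(a,b)=f(a)$ of a $1$-Lipschitz $f$ on $X$ is $1$-Lipschitz on $X\square Y$.

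\textbf{Upper bound.} Let $f$ be admissible for the edge $xx_1$ in $X$, and set $F(a,b)=f(a)$. At $(a,b)$, the $Y$-neighbours $(a,b')$ contribute $F(a,b')-F(a,b)=0$ to the Laplacian, so
\[
\Delta^{X\square Y}F(a,b)=\tfrac{d_X}{d}\,\Delta^X f(a).
\]
Hence $\Delta F(p)-\Delta F(q)=\frac{d_X}{d}\bigl(\Delta f(x)-\Delta f(x_1)\bigr)$. Taking the infimum over $f$ gives
\[
\kappa^{X\square Y}(p,q)\le \frac{d_X}{d}\,\kappa^X(x,x_1).
\]

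\textbf{Lower bound.} Let $F$ be admissible for the edge $pq$. Take its restriction to the slice, $f(a)=F(a,y)$; this is $1$-Lipschitz on $X$ and satisfies $f(x_1)-f(x)=1$. Split $d\bigl(\Delta F(p)-\Delta F(q)\bigr)$ into $X$-neighbour and $Y$-neighbour sums.
\begin{itemize}
\item The $X$-part equals $d_X\bigl(\Delta f(x)-\Delta f(x_1)\bigr)\ge d_X\,\kappa^X(x,x_1)$.
\item The $Y$-part is
\[
\sum_{y'\sim y}\Bigl[\bigl(F(x,y')-F(x,y)\bigr)-\bigl(F(x_1,y')-F(x_1,y)\bigr)\Bigr]
=\sum_{y'\sim y}\Bigl[1-\bigl(F(x_1,y')-F(x,y')\bigr)\Bigr],
\]
using $F(x_1,y)-F(x,y)=1$. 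Each summand is $\ge0$, because $(x,y')\sim(x_1,y')$ and $F$ is $1$-Lipschitz.
\end{itemize}
Adding the two parts and dividing by $d$ gives $\Delta F(p)-\Delta F(q)\ge\frac{d_X}{d}\kappa^X(x,x_1)$. Taking the infimum over $F$ proves the reverse inequality, and hence \eqref{eq:product-formula}.

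\textbf{Main obstacle and fallback.} The hard step is the first one: the M\"unch--Wojciechowski identity is imported, not proved here. If it has to be avoided, I would work instead with Proposition~\ref{prop2.6} applied to the $d$-regular graph $X\square Y$.
\begin{itemize}
\item \emph{Neighbourhood sets.} One has $c_{pq}=c_{xx_1}$,
\[
L_{pq}=L_{xx_1}\times\{y\}\cup\{(x,y'):y'\sim y\},\qquad R_{pq}=R_{xx_1}\times\{y\}\cup\{(x_1,y'):y'\sim y\}.
\]
\item \emph{Upper bound on $m$.} Match $(x,y')\mapsto(x_1,y')$ at cost $1$ each, and use an optimal $X$-assignment on the remaining vertices. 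This gives $m(p,q)\le m(x,x_1)+d_Y$, which is the inequality $\kappa^{X\square Y}(p,q)\ge\frac{d_X}{d}\kappa^X(x,x_1)$.
\item \emph{Lower bound on $m$.} This is the delicate direction. I would write $m$ as a transport cost, pass to its dual via Kantorovich duality over $1$-Lipschitz potentials, and lift an optimal $X$-potential exactly as in the upper-bound step above. This yields $m(p,q)\ge m(x,x_1)+d_Y$, i.e.\ the inequality $\kappa^{X\square Y}(p,q)\le\frac{d_X}{d}\kappa^X(x,x_1)$.
\end{itemize}
A direct exchange argument on assignments does not work for this lower bound, because the triangle inequality loses an additive $1$.
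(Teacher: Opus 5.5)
The paper gives no proof of this statement. It is quoted from \cite{CushingProducts,LLY} and used as a black box in the proof of Theorem~\ref{thm:Hqnotproduct}. Your main argument is correct and complete, modulo the M\"unch--Wojciechowski limit-free formula. You state that formula with the right sign convention for the random-walk Laplacian, and since it is a published theorem, citing it is on the same footing as the paper's own citation; you do not need the fallback. Both inequalities check out:
\begin{enumerate}
\item The lift $F(a,b)=f(a)$ is admissible and satisfies $\Delta F=\frac{d_X}{d}\,\Delta^X f$ at $p$ and $q$, which gives $\le$.
\item Restricting an admissible $F$ to the slice $X\times\{y\}$ gives an admissible $f$. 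The $Y$-contribution $\sum_{y'\sim y}\bigl[1-\bigl(F(x_1,y')-F(x,y')\bigr)\bigr]$ is nonnegative precisely because $(x,y')\sim(x_1,y')$, which gives $\ge$.
\end{enumerate}
A proof straight from definition~\eqref{eq:LLY-definition}, which is all that was available to \cite{LLY}, has to compare transport problems at positive idleness instead. The $X$-marginal of $\mu^\alpha_{(x,y)}$ is $\mu^{\alpha'}_x$ with $1-\alpha'=\frac{d_X}{d}(1-\alpha)$. Projection then bounds $W_1$ in the product below by $W_1$ in $X$. Lifting a coupling gives the reverse bound; this works once $\alpha$ is close to $1$, because then enough mass is forced to travel from $x$ to $x_1$. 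Finally one divides by $1-\alpha$ and lets $\alpha\to1$. Your route trades this coupling bookkeeping and the limit for the imported identity, and both inequalities become one-line test-function computations.

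One caveat concerns the fallback, which you do not need: its lower bound does not work as written. Lifting an optimal potential $f$ for $m(x,x_1)$ only yields $m(p,q)\ge m(x,x_1)+d_Y\bigl(f(x_1)-f(x)\bigr)$. Optimal potentials for the $L_{xx_1}\to R_{xx_1}$ problem need not satisfy $f(x_1)-f(x)=1$. For example, take $C_4$ with vertices $0,1,2,3$ in cyclic order, $x=0$ and $x_1=1$. Then $L_{xx_1}=\{3\}$ and $R_{xx_1}=\{2\}$, and the values $(1,1,1,0)$ give an optimal $1$-Lipschitz potential with $f(x_1)=f(x)$.

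What you actually need is the identity $W_1^X\bigl(\mathbf 1_{L_{xx_1}}+d_Y\delta_x,\,\mathbf 1_{R_{xx_1}}+d_Y\delta_{x_1}\bigr)=m(x,x_1)+d_Y$ for these unnormalized measures on $X$. Projecting any assignment $L_{pq}\to R_{pq}$ onto $X$ then gives $m(p,q)\ge m(x,x_1)+d_Y$. That identity is genuine input, not a formality. Expanding $\mu^\alpha_{x_1}-\mu^\alpha_x$ shows it follows from linearity of $\alpha\mapsto\kappa_\alpha(x,x_1)$ on $[1/(d_X+1),1]$. That linearity in turn follows from the relation $\kappa=\frac{d+1}{d}\kappa_{1/(d+1)}$ of \cite{bou} quoted after Proposition~\ref{prop2.6} (applied in $X$ with $d_X$ in place of $d$), together with concavity of $\alpha\mapsto\kappa_\alpha$. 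So the fallback can be repaired with tools already in the paper, but not by the lifting step as you describe it.
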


\section{Main results}\label{sec:main}

We now define the family of graphs used to prove Theorem~\ref{thm:main}. Let $q\geq5$ be an integer and set $n=5q$. Throughout this section, all subscripts are taken modulo $n$, and we write $\mathbb Z_n=\{0,1,\ldots,n-1\}$. Define a graph $H_q$ with vertex set
$$V(H_q)=\{a_i,b_i:i\in\mathbb Z_n\}.$$
Its edges are determined by the following adjacency relations:
\begin{equation}\label{eq:Hqdef}
a_i\sim a_{i\pm1},\ a_{i\pm q},\ b_i,
\qquad
b_i\sim b_{i\pm1},\ b_{i\pm2q},\ a_i
\end{equation}
for every $i\in\mathbb Z_n$.

According to the construction,  the graph $H_q$ may be viewed as two circulant layers indexed by $\mathbb Z_n$. The $a$-layer has connection set $\{\pm1,\pm q\}$, whereas the $b$-layer has connection set $\{\pm1,\pm2q\}$, and corresponding vertices in the two layers are joined by the perfect matching $\{a_ib_i:i\in\mathbb Z_n\}$. Since $n=5q$ and $q\geq5$, the four neighbors of each vertex within its own layer are distinct. For illustration, the graph $H_5$ is shown in Figure~\ref{fig:H5}. Clearly, $H_q$ has $2n=10q$ vertices and is $5$-regular. In each layer, the edges joining vertices whose indices differ by $1$ form a spanning cycle of length $n$. Thus both layers are connected, and the matching between them implies that $H_q$ is connected.

\begin{figure}[htbp]
\centering
\resizebox{0.82\textwidth}{!}{%
\begin{tikzpicture}[
    x=1cm,
    y=1cm,
    vertex/.style={circle,fill=black,inner sep=1.55pt},
    vlabel/.style={font=\scriptsize,fill=white,inner sep=0.65pt},
    cycleedge/.style={
        draw=unitcolor,
        line width=0.95pt,
        line cap=round
    },
    longedge/.style={
        draw=longcolor,
        line width=0.70pt,
        line cap=round
    },
    matchedge/.style={
        draw=matchcolor,
        line width=0.82pt,
        line cap=round
    },
    outermatch/.style={
        draw=matchcolor,
        line width=0.82pt,
        line cap=round,
        line join=round
    }
]

\def\R{3.25}
\coordinate (CA) at (-5.75,0);
\coordinate (CB) at ( 5.75,0);

\foreach \i in {0,...,24}{
    \pgfmathsetmacro{\anga}{90-14.4*\i}
    \pgfmathsetmacro{\angb}{90+14.4*\i}
    \coordinate (a\i) at ($(CA)+(\anga:\R)$);
    \coordinate (b\i) at ($(CB)+(\angb:\R)$);
}

\foreach \i in {0,...,24}{
    \pgfmathtruncatemacro{\ja}{mod(\i+5,25)}
    \pgfmathtruncatemacro{\jb}{mod(\i+10,25)}
    \draw[longedge] (a\i) -- (a\ja);
    \draw[longedge] (b\i) -- (b\jb);
}

\foreach \i in {0,...,24}{
    \pgfmathtruncatemacro{\j}{mod(\i+1,25)}
    \draw[cycleedge] (a\i) -- (a\j);
    \draw[cycleedge] (b\i) -- (b\j);
}

\foreach \i in {0,...,24}{
    \node[vertex] at (a\i) {};
    \node[vertex] at (b\i) {};
}

\foreach \i in {0,...,24}{
    \pgfmathsetmacro{\anga}{90-14.4*\i}
    \pgfmathsetmacro{\angb}{90+14.4*\i}
    \node[vlabel] at ($(CA)+(\anga:3.69)$) {$a_{\i}$};
    \node[vlabel] at ($(CB)+(\angb:3.69)$) {$b_{\i}$};
}

\foreach \i in {0,...,12}{
    \draw[matchedge] (a\i) -- (b\i);
}

\foreach \i in {13,...,18}{
    \pgfmathsetmacro{\h}{-4.15-0.18*(\i-13)}
    \draw[outermatch]
        (a\i)
        .. controls (-8.80,\h) and (8.80,\h) ..
        (b\i);
}

\foreach \i in {19,...,24}{
    \pgfmathsetmacro{\h}{4.15+0.18*(24-\i)}
    \draw[outermatch]
        (a\i)
        .. controls (-8.80,\h) and (8.80,\h) ..
        (b\i);
}

\begin{scope}[shift={(-4.10,-5.50)}]
    \draw[cycleedge] (0,0) -- (0.85,0);
    \node[
        anchor=west,
        font=\scriptsize,
        text=unitcolor
    ] at (1.00,0) {unit edges};

    \draw[longedge] (3.15,0) -- (4.00,0);
    \node[
        anchor=west,
        font=\scriptsize,
        text=longcolor
    ] at (4.15,0) {long-layer edges};

    \draw[matchedge] (7.40,0) -- (8.25,0);
    \node[
        anchor=west,
        font=\scriptsize,
        text=matchcolor
    ] at (8.40,0) {matching edges};
\end{scope}

\end{tikzpicture}%
}
\caption{The graph $H_5$. The black edges are the unit edges $a_i a_{i+1}$ and $b_i b_{i+1}$, the blue edges are the long-layer edges $a_i a_{i+5}$ and $b_i b_{i+10}$, and the orange-red edges are the matching edges $a_i b_i$. All subscripts are taken modulo $25$.}
\label{fig:H5}
\end{figure}

For each $t\in\mathbb Z_n$, the map $\tau_t$ defined by
\[
\tau_t(a_i)=a_{i+t}\qquad\text{and}\qquad \tau_t(b_i)=b_{i+t}
\]
is an automorphism of $H_q$. Consequently, these translation automorphisms partition $E(H_q)$ into five edge orbits, for which we may choose the representative set
\begin{equation}\label{eq:Hqorbits}
\mathcal R=\{a_0a_1,\ a_0a_q,\ b_0b_1,\ b_0b_{2q},\ a_0b_0\}.
\end{equation}

\begin{theorem}\label{thm:Hqflat}
For every $q\ge5$, the graph $H_q$ is Ricci-flat with respect to \LLY\ curvature.
\end{theorem}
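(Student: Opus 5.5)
By the translation automorphisms $\tau_t$, the curvature is constant on each edge orbit. It therefore suffices to verify $\kappa(x,y)=0$ for the five representatives in $\mathcal R$ from \eqref{eq:Hqorbits}. By Proposition~\ref{prop2.6} with $d=5$, this amounts to showing $m(x,y)=6$ for each $xy\in\mathcal R$.

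\textbf{Step 1: $H_q$ is triangle-free.} A triangle inside the $a$-layer needs three elements of $\{\pm1,\pm q\}$ summing to $0$ modulo $n=5q$. Inside the $b$-layer it needs three elements of $\{\pm1,\pm2q\}$ summing to $0$. For $q\ge5$ neither is possible: the attainable sums $\pm3q$, $\pm6q\equiv\pm q$, and sums involving only one or two $\pm1$'s are nonzero modulo $5q$. A triangle using a matching edge $a_ib_i$ would need a common neighbour of $a_i$ and $b_i$, and there is none. Hence $c_{xy}=0$, so $|L_{xy}|=|R_{xy}|=4$ and $L_{xy}\cap R_{xy}=\varnothing$. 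In particular every term $d(u,\varphi(u))$ is at least $1$.

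\textbf{Step 2: upper bound $m\le 6$.} For each representative I would list $L_{xy}$ and $R_{xy}$ explicitly and exhibit a bijection of cost $6$. The expected pattern is three pairs at distance $1$, coming from $4$-cycles through the edge, plus one pair at distance $3$.

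For $a_0a_1$ we have $L=\{a_{-1},a_q,a_{-q},b_0\}$ and $R=\{a_2,a_{1+q},a_{1-q},b_1\}$. The pairs $a_q\mapsto a_{1+q}$, $a_{-q}\mapsto a_{1-q}$ and $b_0\mapsto b_1$ are edges, and $a_{-1}\mapsto a_2$ has cost $3$.

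The edges $b_0b_1$, $a_0a_q$ and $b_0b_{2q}$ are treated the same way. For $a_0a_q$, the pairs $a_{\pm1}\mapsto a_{q\pm1}$ and $b_0\mapsto b_q$ are edges, and the leftover vertex $a_{-q}$ is sent to $a_{2q}$.

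For the matching edge $a_0b_0$, we have $L=\{a_{\pm1},a_{\pm q}\}$ and $R=\{b_{\pm1},b_{\pm2q}\}$. The pairs $a_{\pm1}\mapsto b_{\pm1}$ have cost $1$, and for $a_{\pm q}$ I would take the cheaper of the two ways of matching them to $b_{\pm2q}$; I expect one of them to give cost $2+2$. So here the pattern may be $1+1+2+2$ rather than $1+1+1+3$.

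\textbf{Step 3: lower bound $m\ge6$.} This is the main step. For the edges with the $1+1+1+3$ pattern, the plan is to find one vertex $u_0\in L_{xy}$ with $d(u_0,v)\ge3$ for every $v\in R_{xy}$. Then any assignment costs at least $3+1+1+1=6$.

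For $a_0a_1$ the vertex $u_0=a_{-1}$ works. The index differences to $a_2,a_{1\pm q}$ are $3$ and $q+2$ or $2-q$ modulo $5q$, none of which is a sum of at most two elements of $\{\pm1,\pm q\}$ when $q\ge5$. Reaching $b_1$ requires using a matching edge plus at least two further steps.

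To make the distance claims rigorous, I would use a small lemma. Since both layers are circulant and the only edges between the layers are the matching edges, a path of length at most $2$ between $a_i$ and $a_j$ stays in the $a$-layer. Such a path therefore exists iff $j-i$ is a sum of at most two connection elements modulo $n$. A path between $a_i$ and $b_j$ of length at most $2$ must use exactly one matching edge, which gives an analogous criterion. Checking these finite sets of residues against $n=5q$ with $q\ge5$ is routine, and I would do it case by case.

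For $a_0b_0$, and for any edge where no single vertex is uniformly far from $R_{xy}$, I would argue instead via the bipartite "distance-$1$" graph between $L$ and $R$. If its maximum matching has size $2$, and every other pair has distance at least $2$, then any assignment costs at least $1+1+2+2=6$.

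The main obstacle is this exhaustive but careful verification of distances modulo $5q$, uniformly for all $q\ge5$. In particular one must ensure that no accidental short paths appear through wrap-around, for example $2q+2q\equiv-q$, which links the $b$-layer jumps to $\pm q$ and could create unexpected distance-$2$ or distance-$1$ pairs between $L$ and $R$.
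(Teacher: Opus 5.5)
Your strategy is the paper's: reduce to the five representatives via the translations $\tau_t$, use the ``sum of at most two connection elements'' criterion for distance $\le 2$, exhibit cost-$6$ bijections, and bound below either by a vertex of $L_{xy}$ at distance $3$ from all of $R_{xy}$ or by the fact that at most two distance-$1$ pairs can be used. The proof goes through, but your Step~2 computation for $a_0a_q$ is wrong, and this sends two edges to a lower-bound argument that fails for them.

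\textbf{The error in Step~2.} The pair $b_0\mapsto b_q$ is not an edge, since $q\notin\{\pm1,\pm2q\}$ modulo $5q$. In fact $d(b_0,b_q)=2$ via $b_0b_{3q}b_q$, which is exactly the wrap-around $-4q\equiv q$ you warned about. Likewise $d(a_{-q},a_{2q})=2$, because $3q\equiv-2q$. Your bijection for $a_0a_q$ still costs $6$, but as $1+1+2+2$, not $1+1+1+3$. The same pattern holds for $b_0b_{2q}$: there $a_0$ and $b_{-2q}$ are each at distance $2$ from both $a_{2q}$ and $b_{4q}$.

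\textbf{Consequence for Step~3.} These two edges cannot be ``treated the same way'' as $a_0a_1$. For $a_0a_q$, each of $a_{\pm1}$ has a neighbour in $R_{xy}$, and each of $a_{-q},b_0$ is at distance $2$ from $a_{2q}$ and $b_q$. So no vertex of $L_{xy}$ is at distance $3$ from all of $R_{xy}$, and the same holds for $b_0b_{2q}$.

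The lower bound for these edges must come from your second argument, which does apply. The only distance-$1$ pairs are $a_{-1}a_{q-1}$ and $a_1a_{q+1}$, respectively $b_{-1}b_{2q-1}$ and $b_1b_{2q+1}$. Hence any bijection costs at least $1+1+2+2=6$.

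With this rerouting the proof is complete and matches the paper's case split. Use the far vertex $a_{-1}$ for $a_0a_1$ and $b_{-1}$ for $b_0b_1$. Use ``only two weight-$1$ pairs'' for $a_0a_q$, $b_0b_{2q}$ and $a_0b_0$.
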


\begin{proof}
As shown above, the graph $H_q$ is $5$-regular. To compute the Lin--Lu--Yau curvature, it suffices to consider the five representative edges in $\mathcal R$ given by \eqref{eq:Hqorbits}. Table~\ref{tab:Hqsets} lists the corresponding sets $L_{xy}$ and $R_{xy}$, while Table~\ref{tab:Hqmatrices} gives the associated distance matrices. We now justify the entries of these matrices.
\begin{table}[ht]
\centering
\caption{Ordered sets $L_{xy}$ and $R_{xy}$ for the five representative edge types of $H_q$.}
\label{tab:Hqsets}
\small
\setlength{\tabcolsep}{3pt}
\begin{tabular}{c>{\raggedright\arraybackslash}p{0.38\textwidth}>{\raggedright\arraybackslash}p{0.38\textwidth}}
\toprule
edge $xy \in \mathcal R$ & $L_{xy}$ & $R_{xy}$ \\
\midrule
$a_0a_1$ & $(a_{-1},a_{-q},a_q,b_0)$ & $(a_2,a_{1-q},a_{1+q},b_1)$ \\
$a_0a_q$ & $(a_{-1},a_1,a_{-q},b_0)$ & $(a_{2q},a_{q-1},a_{q+1},b_q)$ \\
$b_0b_1$ & $(a_0,b_{-1},b_{-2q},b_{2q})$ & $(a_1,b_2,b_{1-2q},b_{1+2q})$ \\
$b_0b_{2q}$ & $(a_0,b_{-1},b_1,b_{-2q})$ & $(b_{2q-1},b_{2q+1},a_{2q},b_{4q})$ \\
$a_0b_0$ & $(a_{-1},a_1,a_{-q},a_q)$ & $(b_{-1},b_1,b_{-2q},b_{2q})$ \\
\bottomrule
\end{tabular}
\end{table}

Recall that $n=5q$ and that all index differences are taken in $\mathbb Z_n$. Define
\begin{align*}
\Delta_{aa}^{(2)}
&=\{0,\pm1,\pm2,\pm q,\pm(q-1),\pm(q+1),\pm2q\},\\
\Delta_{bb}^{(2)}
&=\{0,\pm1,\pm2,\pm2q,\pm(2q-1),\pm(2q+1),\pm4q\},\\
\Delta_{ab}^{(2)}
&=\{0,\pm1,\pm q,\pm2q\}.
\end{align*}
Since each edge within the $a$-layer changes the index by $\pm1$ or $\pm q$, a path of length at most two between $a_i$ and $a_j$ exists if and only if $j-i\in\Delta_{aa}^{(2)}$. Similarly, each edge within the $b$-layer changes the index by $\pm1$ or $\pm2q$, and hence a path of length at most two between $b_i$ and $b_j$ exists if and only if $j-i\in\Delta_{bb}^{(2)}$. Finally, a path of length at most two between vertices in different layers must use exactly one matching edge and at most one edge within a layer. Therefore, such a path between $a_i$ and $b_j$ exists if and only if $j-i\in\Delta_{ab}^{(2)}$. Consequently,
\begin{align*}
d(a_i,a_j)\leq2&\iff j-i\in\Delta_{aa}^{(2)},\\
d(b_i,b_j)\leq2&\iff j-i\in\Delta_{bb}^{(2)},\\
d(a_i,b_j)\leq2&\iff j-i\in\Delta_{ab}^{(2)}.
\end{align*}

The preceding equivalences determine whether the distance between a vertex of $L_{xy}$ and a vertex of $R_{xy}$ is at most two; in that case, the adjacency relations distinguish distance one from distance two. On the other hand, for every $u\in L_{xy}$ and $v\in R_{xy}$, the walk $u-x-y-v$ has length three. It follows that if the relevant index difference does not belong to the appropriate set above, then $d(u,v)=3$. Therefore, applying these criteria to the ordered sets listed in Table~\ref{tab:Hqsets} gives the distance matrices displayed in Table~\ref{tab:Hqmatrices}.

\begin{table}[htbp]
\centering
\caption{Distance matrices $D_{xy}$ for the five representative edge types of $H_q$.}
\label{tab:Hqmatrices}
\small
\setlength{\arraycolsep}{3pt}
\setlength{\tabcolsep}{14pt}
\begin{tabular}{@{}cc@{}}
$\displaystyle
D_{a_0a_1}=\begin{pmatrix}
3&3&3&3\\
3&1&3&3\\
3&3&1&3\\
3&3&3&1
\end{pmatrix}$
&
$\displaystyle
D_{a_0a_q}=\begin{pmatrix}
3&1&3&3\\
3&3&1&3\\
2&3&3&2\\
2&3&3&2
\end{pmatrix}$
\\[1.8em]
$\displaystyle
D_{b_0b_1}=\begin{pmatrix}
1&3&3&3\\
3&3&3&3\\
3&3&1&3\\
3&3&3&1
\end{pmatrix}$
&
$\displaystyle
D_{b_0b_{2q}}=\begin{pmatrix}
3&3&2&2\\
1&3&3&3\\
3&1&3&3\\
3&3&2&2
\end{pmatrix}$
\\[1.8em]
\multicolumn{2}{c}{$\displaystyle
D_{a_0b_0}=\begin{pmatrix}
1&3&3&3\\
3&1&3&3\\
3&3&2&2\\
3&3&2&2
\end{pmatrix}$}
\end{tabular}
\end{table}
For a representative edge $xy$, write $L_{xy}=\{\ell_1,\ell_2,\ell_3,\ell_4\}$ and $R_{xy}=\{r_1,r_2,r_3,r_4\}$ in the orders given in Table~\ref{tab:Hqsets}. Let $\mathcal B_{xy}$ be the weighted complete bipartite graph with bipartition $(L_{xy},R_{xy})$, where the edge $\ell_i r_j$ has weight $w(\ell_i r_j)=d(\ell_i,r_j)$ for $1\leq i,j\leq4$. Thus, $D_{xy}$ is precisely the weighted biadjacency matrix of $\mathcal B_{xy}$. Every bijection from $L_{xy}$ to $R_{xy}$ corresponds uniquely to a perfect matching of $\mathcal B_{xy}$, and conversely. Therefore,
\[
m(x,y)=\min\{w(M):M\text{ is a perfect matching of }\mathcal B_{xy}\},
\]
where $w(M)$ denotes the sum of the weights of the four edges in $M$.

For each representative edge $xy$, Table~\ref{tab:Hqmatchings} gives a perfect matching of $\mathcal B_{xy}$ having total weight $6$. It follows that $m(x,y)\leq6$ for every representative edge.

\begin{table}[htbp]
\centering
\caption{Perfect matchings of weight $6$ in the five auxiliary complete bipartite graphs.}
\label{tab:Hqmatchings}
\small
\setlength{\tabcolsep}{4pt}
\begin{tabular}{c>{\raggedright\arraybackslash}p{0.56\textwidth}c}
\toprule
edge $xy$ & perfect matching & edge weights \\
\midrule
$a_0a_1$ & $\{\ell_1r_1,\ell_2r_2,\ell_3r_3,\ell_4r_4\}$ & $(3,1,1,1)$ \\
$a_0a_q$ & $\{\ell_1r_2,\ell_2r_3,\ell_3r_1,\ell_4r_4\}$ & $(1,1,2,2)$ \\
$b_0b_1$ & $\{\ell_1r_1,\ell_2r_2,\ell_3r_3,\ell_4r_4\}$ & $(1,3,1,1)$ \\
$b_0b_{2q}$ & $\{\ell_1r_3,\ell_2r_1,\ell_3r_2,\ell_4r_4\}$ & $(2,1,1,2)$ \\
$a_0b_0$ & $\{\ell_1r_1,\ell_2r_2,\ell_3r_3,\ell_4r_4\}$ & $(1,1,2,2)$ \\
\bottomrule
\end{tabular}
\end{table}

To show that $m(x,y)=6$ for every representative edge, it remains to prove that $m(x,y)\geq 6$ holds for each representative edge. Let $F_{xy}$ be the spanning subgraph of $\mathcal B_{xy}$ consisting of the edges of weight $1$. Since every edge outside $F_{xy}$ has weight at least $2$, any perfect matching containing at most two edges of $F_{xy}$ has total weight at least 6. Thus, it suffices to consider perfect matchings containing at least three edges of weight $1$.

For $xy=a_0a_1$, the only weight-$1$ edges are $\ell_2r_2$, $\ell_3r_3$, and $\ell_4r_4$. Hence any perfect matching containing three weight-$1$ edges must contain all three of them, and its remaining edge must be $\ell_1r_1$, which has weight $3$. Therefore, every perfect matching of $\mathcal B_{a_0a_1}$ has weight at least $6$.

For $xy=b_0b_1$, the weight-$1$ edges are $\ell_1r_1, \ell_3r_3$, and $\ell_4r_4$. Thus any perfect matching containing three weight-$1$ edges must also contain $\ell_2r_2$, whose weight is $3$. Consequently, every perfect matching of $\mathcal B_{b_0b_1}$ has weight at least $6$.

For each of the remaining representative edges $a_0a_q$, $b_0b_{2q}$, and $a_0b_0$, the graph $F_{xy}$ contains exactly two edges. Hence no perfect matching of the corresponding auxiliary graph can contain three or four edges of weight $1$. It follows that every perfect matching in each of these three cases also has weight at least $6$.

We have therefore proved that $m(x,y)\geq6$ for each representative edge. Together with Table~\ref{tab:Hqmatchings}, this gives $m(x,y)=6$ for all five representative edges. By Proposition \ref{prop2.6}, each representative edge has Lin--Lu--Yau curvature zero. Since every edge of $H_q$ is mapped to one of these five representative edges by a translation automorphism and Lin--Lu--Yau curvature is invariant under graph automorphisms, we conclude that $\kappa(x,y)=0$ for every edge $xy\in E(H_q)$. Therefore, $H_q$ is Lin--Lu--Yau Ricci-flat. This completes the proof.
\end{proof}

\begin{lemma}\label{lem:HqPetersen}
Let $U_q=\{a_i a_{i+1},\,b_i b_{i+1}:i\in\mathbb Z_n\}$ be the set of unit edges of $H_q$, where $n=5q$. Then the spanning subgraph $H_q-U_q$ is the disjoint union of $q$ copies of the Petersen graph. Moreover, an edge of $H_q$ lies in a $5$-cycle if and only if it does not belong to $U_q$.
\end{lemma}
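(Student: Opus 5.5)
We split the proof into the Petersen decomposition and the $5$-cycle characterization.

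\emph{Petersen decomposition.} After deleting $U_q$, each $a_i$ is adjacent only to $a_{i\pm q}$ and $b_i$, and each $b_i$ only to $b_{i\pm 2q}$ and $a_i$. All these moves preserve the residue $i \bmod q$. So for each $r\in\{0,\dots,q-1\}$ the set
\[
V_r=\{a_{r+kq},\,b_{r+kq}:k\in\mathbb Z_5\}
\]
is a union of components, and $H_q-U_q$ is the disjoint union of the $q$ subgraphs induced on the $V_r$. In $V_r$, write $a_{r+kq}$ as $A_k$ and $b_{r+kq}$ as $B_k$. The edges are then $A_k\sim A_{k\pm1}$ (an outer $5$-cycle), $B_k\sim B_{k\pm2}$ (an inner pentagram), and the spokes $A_k\sim B_k$, with indices in $\mathbb Z_5$. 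This is the standard presentation of the Petersen graph as the generalized Petersen graph $GP(5,2)$. The ten vertices are distinct because $n=5q$.

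\emph{Non-unit edges lie in $5$-cycles.} Every non-unit edge lies in some Petersen copy. The Petersen graph is edge-transitive, and the outer cycle $A_0\cdots A_4$ is a $5$-cycle. Hence every edge of the Petersen copy lies in a $5$-cycle, and that cycle is a cycle of $H_q$.

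\emph{Unit edges lie in no $5$-cycle.} This is the main step. For each edge, record its index increment $\delta\in\{\pm1,\pm q,\pm 2q,0\}$ in $\mathbb Z_n$, where matching edges have $\delta=0$. Along any closed walk the increments sum to $0$ modulo $n=5q$. Suppose a $5$-cycle $C$ uses a unit edge, and let $s$ be the number of unit edges in $C$ counted with sign. The other edges contribute multiples of $q$, so $s\equiv 0 \pmod q$. Since $1\le(\text{number of unit edges})\le 5<q$ when $q\ge 6$, this forces $s=0$. For $q=5$, the case $|s|=5$ means all five edges are unit edges with the same sign. Such a cycle has total increment $\pm5\not\equiv 0\pmod{25}$, so $s=0$ in this case too.

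So $s=0$, and $C$ contains an even number of unit edges: two or four. The remaining increments, each in $\{0,\pm q,\pm 2q\}$, must then sum to $0$ modulo $5q$. Dividing by $q$, the non-unit edges give coefficients in $\{0,\pm1,\pm2\}$ summing to $0$ modulo $5$.

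\emph{Case: four unit edges.} There is one non-unit edge, with coefficient $0$, so it is a matching edge. The four unit edges, with $+1,+1,-1,-1$, close up only by returning. A matching edge in a $5$-cycle must be crossed an even number of times, so this case is impossible.

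\emph{Case: two unit edges.} The unit edges are $+1$ and $-1$, and there are three non-unit edges. Crossing between layers happens an even number of times. If no matching edges are used, the cycle stays in one layer with three coefficients from $\{\pm1\}$ (in the $a$-layer) or $\{\pm2\}$ (in the $b$-layer). Their sum is odd, or lies in $\{\pm2,\pm6\}$, and is never $\equiv 0\pmod 5$. If two matching edges are used, exactly one non-unit layer edge remains, with coefficient in $\{\pm1,\pm2\}$, which is nonzero modulo $5$.

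In every case we reach a contradiction, so no $5$-cycle contains a unit edge. The main obstacle is organizing this case analysis cleanly. In particular, distinctness of the vertices is what rules out degenerate cancellations, and the small case $q=5$ must be checked separately as above.
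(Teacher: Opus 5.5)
Your proof is correct. It differs from the paper's mainly in how the unit-edge part is organized.

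The Petersen decomposition matches the paper's. For non-unit edges you invoke edge-transitivity of the Petersen graph, while the paper writes down explicit $5$-cycles through $a_ia_{i+q}$, $b_ib_{i+2q}$ and $a_ib_i$; both are fine.

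For the unit edges, both arguments track subscript increments modulo $5q$, but they are arranged differently.
\begin{itemize}
\item The paper uses translation invariance to fix $a_0a_1$ and $b_0b_1$, deletes the edge, and studies the remaining path of length $4$. It splits on whether this path uses $0$, $2$ or $4$ matching edges. In the matching-free case it writes the net change as $tq+u$ (resp.\ $2tq+u$) and argues from size bounds and parity. The $b$-layer needs a separate subcase $q=5$, and the two layers are handled by nearly duplicate arguments.
\item You reduce the total increment of the whole cycle modulo $q$ at the outset. This forces the signed number of unit edges to vanish, with $q=5$ costing one line, so the number of unit edges is even. Dividing the remaining increments by $q$ leaves coefficients in $\{0,\pm1,\pm2\}$ summing to $0$ modulo $5$. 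A two-case split together with layer parity then finishes.
\end{itemize}
Your version is uniform in the layer and in the choice of unit edge, and it is shorter.

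Two small remarks.
\begin{itemize}
\item Your argument never uses that the vertices of $C$ are distinct: it rules out every closed walk of length $5$ through a unit edge. Your closing remark about distinctness is therefore unnecessary.
\item In the four-unit-edge case, the correct statement is that a closed walk uses an even number of matching edges, as you say later. It is not that a single matching edge is ``crossed an even number of times.''
\end{itemize}
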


\begin{proof}
For each $\rho\in\mathbb Z_q$, define $V_\rho=\{a_{\rho+tq},b_{\rho+tq}:t\in\mathbb Z_5\}$. Then the sets $V_\rho$ form a partition of $V(H_q)$. After the unit edges are deleted, the remaining edges are the $a$-layer edges of step $q$, the $b$-layer edges of step $2q$, and the matching edges. Each such edge joins two vertices whose subscripts are congruent modulo $q$. Hence there are no edges of $H_q-U_q$ between two distinct sets $V_\rho$ and $V_{\rho'}$. Fix $\rho\in\mathbb Z_q$ and write $u_t=a_{\rho+tq}$ and $v_t=b_{\rho+tq}$ for $t\in\mathbb Z_5$. Then the subgraph induced by $V_\rho$ has edges $u_tu_{t\pm1}$, $v_tv_{t\pm2}$, and $u_tv_t$ for $t\in\mathbb Z_5$. Thus $u_0u_1u_2u_3u_4u_0$ and $v_0v_2v_4v_1v_3v_0$ are $5$-cycles, while the edges $u_tv_t$, $t\in\mathbb Z_5$, form a perfect matching between them. This is the standard representation of the Petersen graph. Therefore, the subgraph induced by $V_\rho$ is isomorphic to the Petersen graph, and consequently $H_q-U_q$ is the disjoint union of $q$ copies of the Petersen graph.

We next show that every edge outside $U_q$ lies in a $5$-cycle. For each $i\in\mathbb Z_n$, the edge $a_ia_{i+q}$ lies in the $5$-cycle $a_ia_{i+q}a_{i+2q}a_{i+3q}a_{i+4q}a_i$, the edge $b_ib_{i+2q}$ lies in the $5$-cycle $b_ib_{i+2q}b_{i+4q}b_{i+q}b_{i+3q}b_i$, and the matching edge $a_ib_i$ lies in the $5$-cycle $a_ib_ib_{i+2q}a_{i+2q}a_{i+q}a_i$. Therefore, every edge outside $U_q$ lies in a $5$-cycle.

It remains to prove that no unit edge lies in a $5$-cycle. By translation invariance, it suffices to consider $a_0a_1$ and $b_0b_1$. If a $5$-cycle contains one of these edges, then deleting that edge leaves a path of length four between its endpoints. Since the endpoints lie in the same layer, this path contains zero, two, or four matching edges. First consider the edge $a_0a_1$ and distinguish the following cases.

\textbf{Case 1.} Suppose that the path lies entirely in the $a$-layer. Then each edge changes the subscript by $+1$, $-1$, $+q$, or $-q$. Let $p$ and $m$ be the numbers of edges that change the subscript by $+q$ and $-q$, respectively, and set $k=p+m$ and $t=p-m$. Thus $k$ is the number of long edges in the path, and these long edges produce a net change of $tq$ in the subscript. Let $u$ be the net change in the subscript produced by the remaining $4-k$ unit edges. Then $|t|\leq k$, $|u|\leq4-k$, and, since the path starts at $a_0$ and ends at $a_1$, we have $tq+u\equiv1\pmod{5q}$. Moreover, $-4q\leq tq+u\leq4q$. Since $q\geq5$, we have $tq+u=1$. If $t\ne0$, then $|t|\geq1$, so $|u|=|1-tq|\geq q-1\geq4$. On the other hand, $t\ne0$ implies $k\geq1$, and therefore $|u|\leq4-k\leq3$, a contradiction. If $t=0$, then $p=m$, so $k=2p$ is even. Hence the number $4-k$ of unit edges is even, and their net change $u$, being the sum of an even number of terms from $\{-1,1\}$, is also even. This contradicts $u=1$.

\textbf{Case 2.} Suppose that the path contains exactly two matching edges. Since matching edges do not change the subscript, the change from subscript $0$ to subscript $1$ must be produced by the remaining two non-matching edges. Each of these two edges changes the subscript by an element of $\{\pm1,\pm q,\pm2q\}$. Let their changes be $r$ and $s$. Then $r+s\equiv1\pmod{5q}$. If neither $r$ nor $s$ is equal to $\pm1$, then both are divisible by $q$, so $r+s\equiv0\pmod q$, contradicting $r+s\equiv1\pmod q$. If both belong to $\{\pm1\}$, then $r+s\in\{-2,0,2\}$, none of which is congruent to $1$ modulo $q$ because $q\geq5$. It remains to consider the case in which exactly one of $r$ and $s$ is a unit change. Without loss of generality, write the two changes as $\varepsilon$ and $cq$, where $\varepsilon\in\{-1,1\}$ and $c\in\{-2,-1,1,2\}$. Reducing $\varepsilon+cq\equiv1\pmod{5q}$ modulo $q$ gives $\varepsilon\equiv1\pmod q$, and hence $\varepsilon=1$. It follows that $cq\equiv0\pmod{5q}$, so $5\mid c$, which is impossible.

\textbf{Case 3.} Suppose that the path contains four matching edges. Then none of its edges changes the subscript, so it cannot join $a_0$ to $a_1$.

By the preceding three cases, there is no $a_0$--$a_1$ path of length $4$, and hence the edge $a_0a_1$ lies in no $5$-cycle.

Now consider the edge $b_0b_1$ and distinguish the following three cases.

\textbf{Case 1.} Suppose that there exists a $b_0$--$b_1$ path of length $4$ containing no matching edge. Then the path lies entirely in the $b$-layer, so each edge changes the subscript by an element of $\{\pm1,\pm2q\}$. Let $p$ and $m$ be the numbers of edges that change the subscript by $+2q$ and $-2q$, respectively, and set $k=p+m$ and $t=p-m$. Thus $k$ is the number of long edges, and these edges produce a net change of $2tq$ in the subscript. Let $u$ be the net change in the subscript produced by the remaining $4-k$ unit edges. Since $t=p-m$ and $k=p+m$, we have $|t|\leq k$. Moreover, $u$ is the sum of $4-k$ terms, each equal to $+1$ or $-1$, so $|u|\leq4-k$. Since the path starts at $b_0$ and ends at $b_1$, we have $2tq+u\equiv1\pmod{5q}$. Reducing this congruence modulo $q$ gives $u\equiv1\pmod q$.

\textbf{Subcase 1.1.} Suppose that $q\geq6$. Since $|u|\leq4$, the congruence $u\equiv1\pmod q$ forces $u=1$. Since $u$ is the sum of $4-k$ terms from $\{-1,1\}$, the integers $u$ and $4-k$ have the same parity. Thus $4-k$ is odd, and hence $k$ is odd. Moreover, $t=p-m$ and $k=p+m$ have the same parity, so $t$ is also odd. Since $0\leq k\leq4$, it follows that $k\in\{1,3\}$, and hence $0<|t|\leq k\leq3$. Substituting $u=1$ into $2tq+u\equiv1\pmod{5q}$ gives $2tq\equiv0\pmod{5q}$, and hence $5\mid2t$. Therefore, $5\mid t$, contradicting $0<|t|\leq3$.

\textbf{Subcase 1.2.} Suppose that $q=5$. In this case, $u\equiv1\pmod5$ and $|u|\leq4$, so $u\in\{1,-4\}$. If $u=1$, then the preceding parity argument shows that $k$ and $t$ are odd. Hence $k\in\{1,3\}$ and $0<|t|\leq3$. The congruence $10t+1\equiv1\pmod{25}$ implies $25\mid10t$, and therefore $5\mid t$, again contradicting $0<|t|\leq3$. If $u=-4$, then $u$ is the sum of $4-k$ terms from $\{-1,1\}$. The equality $|u|=4$ forces $k=0$, and all four unit edges change the subscript by $-1$. Consequently, $t=0$ and $2tq+u=-4\not\equiv1\pmod{25}$, again a contradiction. Therefore, no such path exists.

\textbf{Case 2.} Suppose that there exists a $b_0$--$b_1$ path of length $4$ containing exactly two matching edges. Matching edges do not change the subscript, so the change from subscript $0$ to subscript $1$ must be produced by the remaining two non-matching edges. Let the changes produced by these two edges be $r$ and $s$. Since each non-matching edge lies in either the $a$-layer or the $b$-layer, we have $r,s\in\{\pm1,\pm q,\pm2q\}$. Since the path joins $b_0$ to $b_1$, we also have $r+s\equiv1\pmod{5q}$.

If neither $r$ nor $s$ belongs to $\{\pm1\}$, then both are divisible by $q$, so $r+s\equiv0\pmod q$, contradicting $r+s\equiv1\pmod q$. If both $r$ and $s$ belong to $\{\pm1\}$, then $r+s\in\{-2,0,2\}$. Since $q\geq5$, none of these integers is congruent to $1$ modulo $q$, again giving a contradiction. Finally, suppose that exactly one of $r$ and $s$ belongs to $\{\pm1\}$. Write the two changes as $\varepsilon$ and $cq$, where $\varepsilon\in\{-1,1\}$ and $c\in\{-2,-1,1,2\}$. Reducing $\varepsilon+cq\equiv1\pmod{5q}$ modulo $q$ gives $\varepsilon\equiv1\pmod q$. Since $q\geq5$, this forces $\varepsilon=1$. It follows that $cq\equiv0\pmod{5q}$, so $5\mid c$, which is impossible because $c\in\{-2,-1,1,2\}$. Therefore, no such path exists.

\textbf{Case 3.} Suppose that there exists a $b_0$--$b_1$ path of length $4$ containing four matching edges. Then every edge of the path is a matching edge. Since matching edges do not change the subscript, such a path cannot join $b_0$ to $b_1$, a contradiction.

By the preceding three cases, there is no $b_0$--$b_1$ path of length $4$, and hence the edge $b_0b_1$ lies in no $5$-cycle.

Combining the above cases, we have proved that neither $a_0a_1$ nor $b_0b_1$ lies in a $5$-cycle. By translation invariance, no edge of $U_q$ lies in a $5$-cycle. Since every edge outside $U_q$ lies in a $5$-cycle, an edge of $H_q$ lies in a $5$-cycle if and only if it does not belong to $U_q$. This completes the proof.
\end{proof}

\begin{theorem}\label{thm:Hqnotproduct}
For every integer $q\geq5$, the graph $H_q$ is not isomorphic to $\RF$ and admits no nontrivial ordinary Cartesian product decomposition.
\end{theorem}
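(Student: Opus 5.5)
Non-isomorphism with $\RF$ follows by counting vertices, and the Cartesian product claim I would handle through 5-cycles and Lemma~\ref{lem:HqPetersen}.

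\textbf{Not $\RF$.} The graph $H_q$ has $10q\geq 50$ vertices, and $10q=72$ has no integer solution. Hence $H_q\not\cong\RF$ for every $q\geq5$.

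\textbf{Setting up the product case.} Suppose $H_q\cong X\square Y$ with $|V(X)|,|V(Y)|\geq2$. Since $H_q$ is connected, so are $X$ and $Y$. Since $H_q$ is $5$-regular, each of $X,Y$ is regular, say of degrees $d_X,d_Y\geq1$ with $d_X+d_Y=5$. Without loss of generality $(d_X,d_Y)\in\{(1,4),(2,3)\}$. I would then use the standard fact that every cycle of odd length in a Cartesian product, in particular every $5$-cycle, lies in a single fiber. The reason is that the edges of a cycle in each direction project to a closed walk in that factor, so each direction contributes an even number of edges unless the cycle stays inside one fiber. More carefully: a $5$-cycle using both directions would have an even number of $X$-edges and an even number of $Y$-edges, because each projected closed walk in a factor returns to its start. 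The only alternative is one direction contributing all $5$ edges, which forces the cycle into a single fiber.

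\textbf{Using the 5-cycles.} By Lemma~\ref{lem:HqPetersen}, every non-unit edge lies in a $5$-cycle. Each Petersen component $P_\rho$ of $H_q-U_q$ is connected, and any two adjacent edges of $P_\rho$ lie on a common $5$-cycle of the Petersen graph. Since each $5$-cycle lies in a single fiber, edges sharing a $5$-cycle have the same direction. So all edges of a given $P_\rho$ have the same direction, and all $V_\rho$ lies in a single fiber. Each vertex has $3$ Petersen edges, all in one direction, so that factor has degree $\geq3$. Hence $(d_X,d_Y)=(2,3)$ and every Petersen edge is a $Y$-direction edge of degree $3$. The $Y$-fiber through a vertex of $V_\rho$ is then $3$-regular, connected, and contains the connected cubic graph $P_\rho$, so it equals $P_\rho$. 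Thus the unit edges are exactly the $X$-edges, and $U_q$ induces a disjoint union of two $n$-cycles.

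\textbf{The contradiction.} The $X$-fibers are $|V(Y)|=10$ copies of $X$, giving $10|V(X)|=10q$ vertices. But the $X$-fibers are the two unit cycles, of which there are only $2$. Therefore $10=2$, a contradiction.

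The main obstacle is the odd-cycle-in-one-fiber step, specifically the parity argument: each factor's edges along a closed walk must return to the start, so their count in that factor is even. I would state this cleanly as a small lemma before the proof.
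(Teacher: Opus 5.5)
Your proposal has two genuine gaps. First, the ``standard fact'' that every odd cycle of a Cartesian product lies in a single fiber is false, and so is your parity justification. A projected closed walk returns to its start, but it need not have even length unless the factor is bipartite. In $K_3\square K_2$, the cycle $(0,0),(1,0),(2,0),(2,1),(0,1),(0,0)$ is a $5$-cycle with three $K_3$-edges and two $K_2$-edges. For $5$-cycles the correct statement is this. If a $5$-cycle uses $r$ edges of one direction with $1\le r\le 4$, then whichever of $r$ and $5-r$ is odd equals $1$ or $3$. The corresponding projected closed walk is then either a loop (impossible) or a triangle. So you need both factors to be triangle-free. That is true here, but you must say why: the fibers are induced copies of $X$ and $Y$ in $H_q$, and $H_q$ is triangle-free, since every edge has $c_{xy}=0$ ($|L_{xy}|=4$ in Table~\ref{tab:Hqsets}). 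The paper reaches the same point through curvature instead. Ricci-flatness forces the $2$-regular factor to be $C_m$ with $m\ge 6$ and forces the cubic factor to be triangle-free.

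Second, the step ``that factor has degree $\ge 3$, hence $(d_X,d_Y)=(2,3)$'' does not follow. In the case $(1,4)$ the factor $Y$ has degree $4\ge 3$, so $H_q\cong K_2\square Y$ with $Y$ $4$-regular is never excluded. Your next step, ``the $Y$-fiber is $3$-regular, so it equals $P_\rho$,'' explicitly uses $d_Y=3$. The paper rules out degree-$1$ factors with the product formula \eqref{eq:product-formula}: a $K_2$-direction edge would have curvature $\frac15\kappa^{K_2}=\frac25\neq 0$, contradicting Theorem~\ref{thm:Hqflat}. A curvature-free repair also works. Since the Petersen edges are $Y$-edges, the $K_2$-edges form a perfect matching $M\subseteq U_q$, and the swap $\sigma$ along $M$ is an automorphism. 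If $\sigma(a_i)=a_{i+1}$, then $a_ia_{i-1}$ is a $Y$-edge and $\sigma(a_{i-1})=a_{i-2}$ (it cannot be $a_i$). Hence $\sigma$ sends $a_ia_{i-1}$ to $a_{i+1}a_{i-2}$, which is not an edge. The case $\sigma(a_i)=a_{i-1}$ is symmetric.

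With these two repairs the rest of your argument is sound. Any two adjacent Petersen edges lie on a common $5$-cycle, which makes each $P_\rho$ an entire $Y$-fiber. This identifies the $X$-edges with $U_q$ directly, replacing the paper's count $|E_Y|=15q$, and the final component count $10\neq 2$ then goes through.
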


\begin{proof}
The graph $H_q$ has $2n=10q$ vertices. Since $10q\ne72$ for every integer $q\geq5$, it is not isomorphic to the $72$-vertex graph $\RF$.

It remains to prove that $H_q$ admits no nontrivial ordinary Cartesian product decomposition. Suppose, to the contrary, that $H_q\cong X_1\square\cdots\square X_s$, where $s\geq2$ and each $X_j$ is connected and nontrivial. For a vertex $(v_1,\ldots,v_s)$ of this product, its degree is $\sum_{j=1}^s d_{X_j}(v_j)$. Fix $j$ and fix all coordinates except the $j$-th one. Since $H_q$ is $5$-regular, comparing the degrees of the vertices obtained by choosing arbitrary $u_j,v_j\in V(X_j)$ gives $d_{X_j}(u_j)=d_{X_j}(v_j)$. Thus every factor $X_j$ is regular. Let $d_j\geq1$ denote the degree of $X_j$. Then $\sum_{j=1}^s d_j=5$.

Repeated application of the Cartesian product curvature formula \eqref{eq:product-formula} shows that, for an edge of $H_q$ arising from an edge $xy$ of the factor $X_j$, its curvature is $\frac{d_j}{5}\kappa^{X_j}(x,y)$. Since $H_q$ is Ricci-flat and $d_j>0$, every edge of every factor $X_j$ has curvature zero. Hence each $X_j$ is Ricci-flat. Moreover, every connected $1$-regular graph is isomorphic to $K_2$, whose unique edge has curvature $2$ by Lemma \ref{lem2.1}. Therefore, no factor can have degree $1$.  This means $d_j\geq2$ for every $j$. Since the positive integers $d_1,\ldots,d_s$ sum to $5$, it follows that $s=2$ and, after interchanging the factors if necessary, $d_1=2$ and $d_2=3$.

The finite connected $2$-regular factor is a cycle $C_m$ for some $m\geq3$. Since this factor is Ricci-flat, Lemma~\ref{lem2.2} gives $m\geq6$. Consequently, we may write $H_q\cong C_m\square Y$, where $Y$ is a finite connected $3$-regular Ricci-flat graph.

We first show that $Y$ is triangle-free. Suppose, for a contradiction, that an edge $xy\in E(Y)$ belongs to a triangle, so that $x$ and $y$ have a common neighbor. Since $Y$ is $3$-regular, we have $|L_{xy}|=|R_{xy}|=3-1-|N(x)\cap N(y)|\leq1$. If these sets are empty, then $m(x,y)=0$. If each contains one vertex, then the unique vertex of $L_{xy}$ can be joined to the unique vertex of $R_{xy}$ by the walk through $x$ and $y$ of length $3$, so $m(x,y)\leq3$. In either case, $m(x,y)\leq3$. However, Proposition~\ref{prop2.6} gives $\kappa(x,y)=\frac{1}{3}(4-m(x,y))>0$, contradicting the Ricci-flatness of $Y$. Hence $Y$ contains no triangle.

We next show that no $C_m$-direction edge of $C_m\square Y$ lies in a $5$-cycle. Suppose that a $5$-cycle contains a $C_m$-direction edge, and let $r$ be the number of its $C_m$-direction edges. Projecting the cycle onto $C_m$ and suppressing consecutive repetitions gives a closed walk of length $r$. Since $m\geq6$, the graph $C_m$ contains no odd cycle of length at most $5$, and hence no odd closed walk of length at most $5$. Thus $r$ is even. Since $1\leq r\leq5$, we have $r\in\{2,4\}$. If $r=2$, then the remaining three edges project onto a closed walk of length $3$ in $Y$, which forms a triangle, contrary to the triangle-freeness of $Y$. If $r=4$, then the remaining edge projects onto a closed walk of length $1$ in $Y$, which would be a loop, impossible because $Y$ is simple. Therefore, every edge of $C_m\square Y$ lying in a $5$-cycle is a $Y$-direction edge.

By Lemma~\ref{lem:HqPetersen}, every edge in $E(H_q)\setminus U_q$ lies in a $5$-cycle. By the preceding argument, every such edge must be a $Y$-direction edge. Hence, under the assumed isomorphism $H_q\cong C_m\square Y$, we have $E(H_q)\setminus U_q\subseteq E_Y$, where $E_Y$ denotes the set of all $Y$-direction edges.

For each vertex of $C_m$, the corresponding $Y$-fiber contains exactly $|E(Y)|$ $Y$-direction edges. Therefore, since $Y$ is $3$-regular and $|V(C_m)||V(Y)|=|V(H_q)|=10q$, we have
\[
|E_Y|=|V(C_m)||E(Y)|=|V(C_m)|\frac{3|V(Y)|}{2}=\frac{3|V(H_q)|}{2}=15q.
\]
On the other hand, since $H_q$ is $5$-regular on $10q$ vertices, it has $25q$ edges. The set $U_q$ consists of the $5q$ unit edges in each of the two layers, and hence $|U_q|=10q$. It follows that $|E(H_q)\setminus U_q|=25q-10q=15q$. Therefore, $E(H_q)\setminus U_q=E_Y$. Since every edge of $C_m\square Y$ is either a $C_m$-direction edge or a $Y$-direction edge, the $C_m$-direction edges are precisely the edges in $U_q$.

For each $y\in V(Y)$, the vertices $\{(c,y):c\in V(C_m)\}$ together with the $C_m$-direction edges between them induce a copy of $C_m$. Hence the spanning subgraph of $C_m\square Y$ formed by all $C_m$-direction edges is the disjoint union of $|V(Y)|$ copies of $C_m$. On the other hand, the spanning subgraph of $H_q$ formed by the unit edges consists of exactly the two cycles
\[
a_0a_1\cdots a_{5q-1}a_0
\qquad\text{and}\qquad
b_0b_1\cdots b_{5q-1}b_0.
\]
Since the $C_m$-direction edges correspond precisely to the unit edges in $U_q$, these two spanning subgraphs are isomorphic. Comparing their numbers of connected components gives $|V(Y)|=2$, which is impossible since $Y$ is $3$-regular. Hence $H_q$ admits no nontrivial ordinary Cartesian product decomposition. This completes the proof.
\end{proof}

\textbf{Proof of Theorem~\ref{thm:main}.} The result follows immediately from Theorems~\ref{thm:Hqflat} and~\ref{thm:Hqnotproduct}. \qed

\section{Bone-idleness of \texorpdfstring{$H_q$}{Hq}}

We first recall the notion of bone-idleness. For an edge $xy$ in a graph and an idleness parameter $\alpha\in[0,1]$, let $\kappa_\alpha(x,y)$ denote the Ollivier--Ricci curvature of $xy$, as defined in Definition~\ref{def:OR-curvature}. The edge $xy$ is called \emph{bone-idle} if $\kappa_\alpha(x,y)=0$ for every  $\alpha\in [0,1]$. A graph is called bone-idle if each of its edges is bone-idle.

It is clear that bone-idleness is stronger than Lin--Lu--Yau Ricci-flatness. Hehl proved the following characterization.

\begin{theorem}[Hehl \cite{HehlBone}]\label{thm:bone-idle-characterization}
Let $G=(V,E)$ be a locally finite graph and let $xy\in E(G)$. Then the following statements are equivalent:
\begin{enumerate}
\item The edge $xy$ is bone-idle, that is, $\kappa_\alpha(x,y)=0$ for every $\alpha\in[0,1]$.
\item $\kappa_0(x,y)=\kappa(x,y)=0$.
\end{enumerate}
Consequently, $G$ is bone-idle if and only if it is both $0$-Ricci-flat and Lin--Lu--Yau Ricci-flat.
\end{theorem}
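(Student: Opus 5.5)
The implication from bone-idleness to the second statement is immediate. If $\kappa_\alpha(x,y)=0$ for every $\alpha\in[0,1]$, then $\kappa_0(x,y)=0$, and the quotient in \eqref{eq:LLY-definition} is identically zero, so $\kappa(x,y)=0$. The content of the statement is the converse. I would derive it from a single structural fact: the function $f(\alpha)=\kappa_\alpha(x,y)$ is concave on $[0,1]$ and satisfies $f(1)=0$.

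\textbf{Step 1 (concavity).} For fixed $x$, the measure $\mu_x^\alpha$ is affine in $\alpha$: it equals $\alpha\delta_x+(1-\alpha)\mu_x^0$. The same holds for $\mu_y^\alpha$. The $1$-Wasserstein distance of Definition~\ref{def:Wasserstein} is jointly convex in the pair $(\mu,\nu)$. To see this, take optimal plans $\pi_1\in\Pi(\mu_1,\nu_1)$ and $\pi_2\in\Pi(\mu_2,\nu_2)$. For $\lambda\in[0,1]$, the mixture $\lambda\pi_1+(1-\lambda)\pi_2$ is a transport plan between the mixed measures, and its cost is the same convex combination of the two costs. Hence $\alpha\mapsto W_1(\mu_x^\alpha,\mu_y^\alpha)$ is convex, and $f(\alpha)=1-W_1(\mu_x^\alpha,\mu_y^\alpha)$ is concave. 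At $\alpha=1$ both measures are Dirac masses at adjacent vertices, so $W_1=1$ and $f(1)=0$.

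\textbf{Step 2 (monotone secant).} Because $f(1)=0$, we can write
\[
\frac{f(\alpha)}{1-\alpha}=-\frac{f(1)-f(\alpha)}{1-\alpha}.
\]
This is the negative of the slope of the secant of $f$ over $[\alpha,1]$. For a concave function, secant slopes with a fixed right endpoint are nonincreasing in the left endpoint. Therefore $\alpha\mapsto f(\alpha)/(1-\alpha)$ is nondecreasing on $[0,1)$. Its limit as $\alpha\to1^-$ is $\kappa(x,y)$ by \eqref{eq:LLY-definition}, so
\[
\kappa_\alpha(x,y)\le(1-\alpha)\,\kappa(x,y)\qquad\text{for all }\alpha\in[0,1).
\]

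\textbf{Step 3 (conclusion).} Now assume $\kappa_0(x,y)=\kappa(x,y)=0$. Step 2 gives $\kappa_\alpha(x,y)\le0$ for every $\alpha$. Concavity together with $f(0)=f(1)=0$ gives $f(\alpha)\ge(1-\alpha)f(0)+\alpha f(1)=0$. Hence $\kappa_\alpha(x,y)=0$ for all $\alpha\in[0,1]$, so the edge is bone-idle.

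The graph-level statement follows by applying the edge-level equivalence to every edge. Local finiteness is used only to make the measures $\mu_x^\alpha$ finitely supported, so that the transport minimum exists. The only genuinely nontrivial ingredient is the joint convexity of $W_1$ in Step 1. I expect no obstacle there beyond checking that the mixed plan has the correct marginals.
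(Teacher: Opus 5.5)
The paper gives no proof of this statement. It is quoted from Hehl \cite{HehlBone}, so there is no in-paper argument to compare against. Judged on its own, your proof is correct and complete.

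The three steps all hold as written:
\begin{itemize}
\item The affine dependence $\mu_x^\alpha=\alpha\delta_x+(1-\alpha)\mu_x^0$, with the same parameter for $x$ and $y$, together with mixing of transport plans, gives convexity of $\alpha\mapsto W_1(\mu_x^\alpha,\mu_y^\alpha)$.
\item The three-chord inequality for the concave function $f(\alpha)=\kappa_\alpha(x,y)$ with $f(1)=0$ makes $f(\alpha)/(1-\alpha)$ nondecreasing on $[0,1)$. As a side benefit, this shows that the limit defining $\kappa(x,y)$ exists, which the paper takes for granted.
\item The chord bound $f(\alpha)\ge(1-\alpha)f(0)+\alpha f(1)$ closes the argument.
\end{itemize}

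An alternative route uses results the paper already cites. Import the concavity lemma of Lin, Lu and Yau \cite{LLY} and the piecewise-linearity of the idleness function from \cite{bou}. These give $\kappa_\alpha(x,y)=(1-\alpha)\kappa(x,y)$ on a terminal interval $[\alpha_0,1]$, with $\alpha_0=1/(d+1)$ in the $d$-regular case, as in the paper's Remark. Then $\kappa(x,y)=0$ forces $f\equiv0$ on $[\alpha_0,1]$, and concavity with $f(0)=0$ forces $f\equiv0$ on $[0,\alpha_0]$.

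That route carries extra structural information, for instance that $\kappa$ is determined by the single value $\kappa_{1/(d+1)}$. However, it relies on a considerably more delicate theorem. Your argument replaces ``$f$ vanishes on a terminal interval'' by the supporting-line bound $f(\alpha)\le(1-\alpha)\kappa(x,y)$ at $\alpha=1$. It needs only the elementary joint convexity of $W_1$, and it works verbatim for arbitrary locally finite graphs with no regularity assumption.
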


The study of regular bone-idle graphs constitutes another natural and interesting direction in the study of discrete Ricci curvature. In this direction, Hehl \cite{HehlBone} proved that no $3$-regular bone-idle graph exists and gave a complete classification of all $4$-regular bone-idle graphs. In degree five, he proved that no symmetric $5$-regular graph is bone-idle and that no Cartesian product of a $3$-regular graph and a $2$-regular graph is bone-idle. The existence of a $5$-regular bone-idle graph remains open \cite{HehlBone}.

\begin{theorem}\label{thm:Hq-not-bone-idle}
Let $q\geq5$, let $n=5q$, and let $H_q$ be the graph defined above. Then, for every $i\in\mathbb Z_n$, we have $\kzero(a_i,a_{i+1})=\kzero(b_i,b_{i+1})=0$, whereas $\kzero(a_i,a_{i+q})=\kzero(b_i,b_{i+2q})=\kzero(a_i,b_i)=-1/5$. Hence $H_q$ is not bone-idle.
\end{theorem}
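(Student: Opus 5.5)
The plan is to compute $\kzero$ on the five representative edges in $\mathcal R$ and then transfer the values to all edges by the translation automorphisms $\tau_t$, exactly as in the proof of Theorem~\ref{thm:Hqflat}. For $\alpha=0$, the measures $\mu_x^0$ and $\mu_y^0$ are uniform with mass $1/5$ on the $5$-element sets $N(x)$ and $N(y)$. By the Birkhoff--von Neumann theorem, $W_1$ between two uniform measures on equal-size sets is attained by a bijection. Hence
\[
\kzero(x,y)=1-\tfrac15 M(x,y),
\]
where $M(x,y)$ is the minimum total distance over bijections $N(x)\to N(y)$. I would first check that $c_{xy}=0$ for every representative edge, so that $H_q$ has no triangles. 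This is read off Table~\ref{tab:Hqsets}, since $|L_{xy}|=4$ in all cases. Then $N(x)=\{y\}\cup L_{xy}$ and $N(y)=\{x\}\cup R_{xy}$. The relevant $5\times5$ cost matrix is $D_{xy}$ from Table~\ref{tab:Hqmatrices}, bordered by one extra row (for $y$) and one extra column (for $x$). These border entries are $d(y,x)=1$, $d(y,r)=1$ for $r\in R_{xy}$, and $d(\ell,x)=1$ for $\ell\in L_{xy}$.

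Since every entry of this bordered matrix is at least $1$, we always have $M(x,y)\ge5$. Moreover, $M(x,y)=5$ holds exactly when the weight-$1$ entries contain a perfect matching.

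For the unit edges $a_0a_1$ and $b_0b_1$, I would exhibit such a matching. Send $y\mapsto r_k$ and $\ell_k\mapsto x$, where $\ell_kr_k$ is the unique weight-$3$ diagonal entry ($k=1$, resp.\ $k=2$). Keep the three weight-$1$ diagonal pairs. This gives $M=5$ and $\kzero=0$.

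For $a_0a_q$, $b_0b_{2q}$ and $a_0b_0$, the lower bound $M\ge6$ follows from the structure of $F_{xy}$, which has exactly two edges (as shown in the proof of Theorem~\ref{thm:Hqflat}). Suppose a perfect matching used only weight-$1$ entries. If $y$ is sent to $x$, then all four of $L_{xy}$ must be matched into $R_{xy}$ along weight-$1$ edges. Otherwise $y\mapsto r$ and some $\ell\mapsto x$, and the remaining three vertices of $L_{xy}$ need three weight-$1$ edges in $F_{xy}$. Both options are impossible, so some entry used is at least $2$ and $M\ge6$.

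For the matching upper bound, I would take the weight-$6$ matching of Table~\ref{tab:Hqmatchings}, which consists of two weight-$1$ edges and two weight-$2$ edges. I would replace one weight-$2$ pair $\ell r$ by $\ell\mapsto x$ and $y\mapsto r$, at cost $1+1$. The remaining weight-$2$ pair is kept, giving total $1+1+2+1+1=6$. Hence $\kzero=1-6/5=-1/5$ on these three edge orbits.

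The step I expect to need the most care is justifying the reduction of $W_1$ to a bijection (Birkhoff) and confirming the absence of triangles. An alternative to Birkhoff is to certify the lower bound by Kantorovich duality with an explicit $1$-Lipschitz function; I would keep this as a backup. Finally, since $\kzero\neq0$ on, say, $a_0b_0$, Theorem~\ref{thm:bone-idle-characterization} shows that $H_q$ is not bone-idle.
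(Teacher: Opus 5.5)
Your proposal is correct and follows essentially the same route as the paper. Both reduce $W_1(\mu_x^0,\mu_y^0)$ to a minimum over bijections $N(x)\to N(y)$ (you via Birkhoff--von Neumann, the paper by citing Hehl's Lemma~2.6), use that all costs are at least $1$, and exhibit the same cost-$5$ and cost-$6$ assignments as Table~\ref{tab:bone-costs}. The one small variation is the lower bound $m_0\ge 6$ for the three non-unit edge types: you observe that $F_{xy}$ has only two weight-$1$ edges while an all-weight-$1$ perfect matching would need three or four, whereas the paper exhibits explicit Hall-violating sets such as $\{a_{-q},b_0\}$; both arguments are valid.
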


\begin{proof}
By the cyclic symmetry of $H_q$, it suffices to compute the zero-idleness Ollivier--Ricci curvature for the five representative edge types $a_0a_1$, $a_0a_q$, $b_0b_1$, $b_0b_{2q}$, and $a_0b_0$. For an edge $xy$ of a $d$-regular graph, define
\[
m_0(x,y)=\min_{\psi:N(x)\to N(y)}\sum_{z\in N(x)}d\bigl(z,\psi(z)\bigr),
\]
where the minimum is taken over all bijections $\psi:N(x)\to N(y)$. Since $\mu_x^0$ and $\mu_y^0$ are the uniform probability measures on the $d$-element sets $N(x)$ and $N(y)$, respectively, it follows from \cite[Lemma~2.6]{HehlRegular} that $W_1(\mu_x^0,\mu_y^0)=m_0(x,y)/d$. Hence, for every edge $xy$ of $H_q$, we have $\kzero(x,y)=1-m_0(x,y)/5$.

By Table~\ref{tab:Hqsets}, the endpoints of each representative edge $xy$ of $H_q$ have no common neighbor, and hence every summand in the definition of $m_0(x,y)$ is at least one. Therefore, $m_0(x,y)\geq5$. Suppose that the corresponding row of Table~\ref{tab:Hqsets} is written as $L_{xy}=(u_1,u_2,u_3,u_4)$ and $R_{xy}=(v_1,v_2,v_3,v_4)$. For convenience, set $u_5=y$ and $v_5=x$. Then
\[
N(x)=(u_1,u_2,u_3,u_4,u_5)
\qquad\text{and}\qquad
N(y)=(v_1,v_2,v_3,v_4,v_5).
\]
In Table~\ref{tab:bone-costs}, a tuple $(j_1,j_2,j_3,j_4,j_5)$ denotes the bijection $\psi:N(x)\to N(y)$ defined by $\psi(u_i)=v_{j_i}$ for every $i\in\{1,2,3,4,5\}$. The final column lists the distances $d(u_i,v_{j_i})$ for $i\in\{1,2,3,4,5\}$.

\begin{table}[htbp]
\centering
\caption{Assignments for the zero-idleness curvature of the five representative edge types of $H_q$.}
\label{tab:bone-costs}
\small
\setlength{\tabcolsep}{7pt}
\begin{tabular}{ccc}
\toprule
edge type & assignment $\psi$ & distances along $\psi$ \\
\midrule
$a_0a_1$ & $(5,2,3,4,1)$ & $(1,1,1,1,1)$ \\
$a_0a_q$ & $(2,3,1,5,4)$ & $(1,1,2,1,1)$ \\
$b_0b_1$ & $(1,5,3,4,2)$ & $(1,1,1,1,1)$ \\
$b_0b_{2q}$ & $(3,1,2,5,4)$ & $(2,1,1,1,1)$ \\
$a_0b_0$ & $(1,2,3,5,4)$ & $(1,1,2,1,1)$ \\
\bottomrule
\end{tabular}
\end{table}

From Table~\ref{tab:bone-costs}, we can see that the assignments for the two unit edge types have total cost $5$, and hence $m_0(a_0,a_1)=m_0(b_0,b_1)=5$. For the remaining three edge types, the displayed assignments have total cost $6$, so $m_0(a_0,a_q)\leq6$, $m_0(b_0,b_{2q})\leq6$, and $m_0(a_0,b_0)\leq6$.

We next show that none of the latter three edge types admits an assignment of total cost $5$. Let $\mathcal B^0_{xy}$ be the weighted complete bipartite graph with bipartition $(N(x),N(y))$, where each edge $zw$ with $z\in N(x)$ and $w\in N(y)$ has weight $w(zw)=d(z,w)$. Every bijection $\psi:N(x)\to N(y)$ corresponds uniquely to a perfect matching of $\mathcal B^0_{xy}$, and the total weight of this perfect matching is precisely $\sum_{z\in N(x)}d\bigl(z,\psi(z)\bigr)$. Consequently, $m_0(x,y)$ is the minimum weight of a perfect matching in $\mathcal B^0_{xy}$.

Since $N(x)\cap N(y)=\varnothing$, every edge of $\mathcal B^0_{xy}$ has weight at least one. As both parts have five vertices, a perfect matching of total weight $5$ exists if and only if there is a perfect matching all of whose edges have weight one. Let $\mathcal B^1_{xy}$ denote the spanning subgraph of $\mathcal B^0_{xy}$ consisting of the edges of weight one. Thus, $m_0(x,y)=5$ if and only if $\mathcal B^1_{xy}$ has a perfect matching.

For $I\subseteq N(x)$, let $N_{\mathcal B^1_{xy}}(I)$ denote the neighborhood of $I$ in $\mathcal B^1_{xy}$. For the edge type $a_0a_q$, take $I=\{a_{-q},b_0\}\subseteq N(a_0)$. Then $N_{\mathcal B^1_{a_0a_q}}(I)=\{a_0\}$, and hence $\lvert N_{\mathcal B^1_{a_0a_q}}(I)\rvert=1<2=\lvert I\rvert$. For the edge type $b_0b_{2q}$, take $I=\{a_0,b_{-2q}\}\subseteq N(b_0)$. Then $N_{\mathcal B^1_{b_0b_{2q}}}(I)=\{b_0\}$, so again $\lvert N_{\mathcal B^1_{b_0b_{2q}}}(I)\rvert=1<2=\lvert I\rvert$. Finally, for the edge type $a_0b_0$, take $I=\{a_{-q},a_q\}\subseteq N(a_0)$. Then $N_{\mathcal B^1_{a_0b_0}}(I)=\{a_0\}$, and thus $\lvert N_{\mathcal B^1_{a_0b_0}}(I)\rvert=1<2=\lvert I\rvert$. Therefore, Hall's condition fails in each of the three cases, and none of the corresponding graphs $\mathcal B^1_{xy}$ has a perfect matching. Hence none of these three edge types admits an assignment of total cost $5$. Combining this with the assignments of total cost $6$ displayed in Table~\ref{tab:bone-costs}, we obtain $m_0(a_0,a_q)=m_0(b_0,b_{2q})=m_0(a_0,b_0)=6$.

Since $H_q$ is $5$-regular, we have $\kzero(x,y)=1-m_0(x,y)/5$. Thus, $\kzero(x,y)=0$ for the two unit-edge types $a_0a_1$ and $b_0b_1$, whereas $\kzero(x,y)=-1/5$ for the edge types $a_0a_q$, $b_0b_{2q}$, and $a_0b_0$. Therefore, $H_q$ is Lin--Lu--Yau Ricci-flat but not $0$-Ricci-flat, and hence it is not bone-idle by Theorem~\ref{thm:bone-idle-characterization}. This completes the proof.
 \end{proof}

\section*{Acknowledgments}
This work was supported by the Natural Science Foundation of Shandong Province (Grant No. ZR2024MA073), the Institute for Basic Science (IBS-R029-C4), the National Natural Science Foundation of China (Grant No. 12171414), and the Taishan Scholars Special Project of Shandong Province.

\section*{Data availability statement}
No datasets were generated or analysed during the current study.  A short verification script for the finite assignment calculations is included with the submission files for editorial convenience.
\section*{Declaration of generative AI and AI-assisted technologies in the manuscript preparation process}
During the preparation of this work, the authors used AI for checking grammatical errors and typos during the revision stage. The authors reviewed and edited the output as needed and take full responsibility for the content of the published article.
\section*{Conflict of interest statement}
The authors declare that they have no conflict of interest.

\bibliographystyle{plainurl}

\end{document}